\documentclass[12pt,leqno]{article}
\usepackage{amsmath,amssymb,amscd,latexsym}

\usepackage[all]{xy}

\newcommand{\C}{{\mathbb{C}}}
\newcommand{\R}{{\mathbb{R}}}
\newcommand{\Z}{{\mathbb{Z}}}

\newcommand{\ddet}{\mathrm{det}}
\newcommand{\ess}{\mathrm{ess}}

\newcommand{\supp}{\mathrm{supp}\,}

\newcommand{\GL}{\mathrm{GL}\,}
\newcommand{\Gr}{\mathrm{Gr}}

\newcommand{\tr}{\mathrm{tr}}
\newcommand{\Ah}{{\mathcal A}}
\newcommand{\Bh}{\mathcal{B}}

\newcommand{\Hh}{{\mathcal H}}

\newcommand{\Kh}{\mathcal{K}}
\newcommand{\hKh}{\hat{\Kh}}
\newcommand{\oKh}{\overline{\Kh}}
\newcommand{\Lh}{{\mathcal L}}
\newcommand{\Mh}{{\mathcal M}}
\newcommand{\Nh}{{\mathcal N}}

\newcommand{\Rh}{{\mathcal R}}

\newcommand{\Zh}{\mathcal{Z}}

\newcommand{\oc}{\overline{c}}
\newcommand{\hA}{\hat{A}}

\newcommand{\oa}{\overline{a}}

\newcommand{\hE}{\hat{E}}
\newcommand{\hP}{\hat{P}}
\newcommand{\hU}{\hat{U}}
\newcommand{\hgamma}{\hat{\gamma}}
\newcommand{\hOmega}{\hat{\Omega}}
\newcommand{\htau}{\hat{\tau}}
\newcommand{\hPhi}{\hat{\Phi}}
\newcommand{\hRh}{\hat{\Rh}}
\newcommand{\ohne}{\setminus}
\newcommand{\nsum}{\textstyle\sum}
\newcommand{\tei}{\, | \,}

\newcommand{\verk}{\raisebox{0.03cm}{\mbox{\scriptsize $\,\circ\,$}}}
\newtheorem{theorem}{Theorem}

\newtheorem{prop}[theorem]{Proposition}
\newtheorem{cor}[theorem]{Corollary}

\newenvironment{rem}{\noindent {\bf Remark}}{}
\newenvironment{rems}{\noindent {\bf Remarks}}{}
\newenvironment{claim}{\noindent {\bf Claim}\it}{}

\newenvironment{proof}{\noindent {\bf Proof}}{\mbox{}\hspace*{\fill}$\Box$}
\textheight120ex
\textwidth85ex
\oddsidemargin1cm
\parskip1.8ex
\parindent0em
\begin{document}
\title{Determinants on von~Neumann algebras, Mahler measures and Ljapunov exponents}
\author{Christopher Deninger}
\date{}
\maketitle
\begin{abstract}
For an ergodic measure preserving action on a probability space, consider the corresponding crossed product von Neumann algebra. We calculate the Fuglede-Kadison determinant for a class of operators in this von Neumann algebra in terms of the Ljapunov exponents of an associated measurable cocycle. The proof is based on recent work of Dykema and Schultz. As an application one obtains formulas for the Fuglede-Kadison determinant of noncommutative polynomials in the von Neumann algebra of the discrete Heisenberg group. These had been previously obtained by Lind and Schmidt via entropy considerations.
\end{abstract}

\section{Introduction} \label{s1}
The Fuglede--Kadison determinant is a very natural generalization of the usual determinant to finite von~Neumann algebras. It was introduced in 1952 in \cite{FK} and has since found several interesting applications. To name but a few this determinant appears in the definition of analytic and combinatorial $L^2$-torsion -- generalizations of Ray--Singer and Reidemeister torsion to non-compact covering spaces \cite{L}. It plays a role in the work of Haagerup and Schultz on the invariant subspace problem for operators in II$_1$-factors \cite{HSch1} and it is closely related to the entropy of algebraic actions of discrete amenable groups \cite{D}, \cite{DS}. 

One of the first and most basic constructions of finite von~Neumann algebras is the group-measure space construction $\Rh$ of Murray and von~Neumann for an automorphism of a finite probability space. It is an obvious problem to calculate the Fuglede--Kadison determinant for the ``polynomial'' operators that generate $\Rh$. This problem could have been stated but probably not solved in 1952. The answer which we give in the present paper requires the Oseledets multiplicative ergodic theorem and recent work of Dykema and Schultz on the Aluthge transform \cite{DSch}. In more detail the situation is the following: For an ergodic automorphism $\gamma$ of a probability space $(\Omega , \Ah , P)$ consider the semidirect product von~Neumann algebra $\Rh = L^{\infty} (\Omega) \rtimes_{\gamma} \Z$. Let $U$ be the unitary operator corresponding to the action of $\gamma$. Then the finite sums of the form $\Phi = \sum_i a_i U^i$ with $a_i$ in $L^{\infty} (\Omega)$ form a strongly dense subalgebra of $\Rh$. In this paper we calculate the Fuglede--Kadison determinant $\det_{\Rh} \Phi$ for a certain class of elements $\Phi$. The answer is given in terms of the Ljapunov exponents $\chi_i$ of a matrix $A_{\Phi}$ over the ring $L^{\infty} (\Omega)$ which is attached to $\Phi$ by a simple receipe. The formula has the form
\begin{equation} \label{eq:1}
\log \ddet_{\Rh} \Phi = \sum_j r_j \chi^+_j \; ,
\end{equation}
where $r_j$ is the multiplicity of $\chi_j$ and $\chi^+ = \max (\chi ,0)$. If there is only one Ljapunov exponent or if all $\chi_j$ are positive the answer is more explicit: we have
\begin{equation} \label{eq:2}
\log \ddet_{\Rh} \Phi = \left( \int_{\Omega} \log |\det A_{\Phi} (\omega)| d P (\omega) \right)^+
\end{equation}
in the first case and
\begin{equation} \label{eq:3}
\log \ddet_{\Rh} \Phi = \int_{\Omega} \log |\det A_{\Phi} (\omega)| \, dP (\omega) 
\end{equation}
in the second. Ljapunov exponents are hard to calculate in general but they have been much studied and the results can be applied to the calculation of Fuglede--Kadison determinants via formula \eqref{eq:1}. If a von~Neumann algebra is the direct integral of semidirect products as above, the logarithmic Fuglede--Kadison determinant being the integral of their logarithmic Fuglede--Kadison determinants can be calculated in terms of \eqref{eq:1} as well. For example, the von~Neumann algebra $\Nh\Gamma$ of the discrete Heisenberg group $\Gamma$ is the direct integral of the rotation algebras $\Rh_{\zeta} = L^{\infty} (S^1) \rtimes_{\zeta} \Z$ for $\zeta \in S^1$. Using formula \eqref{eq:2} we therefore find
\begin{equation} \label{eq:4}
\log \ddet_{\Nh\Gamma} (1 - a (y,z) x) = \int_{S^1} \left( \int_{S^1} \log |a (\eta, \zeta)| \, d\mu (\eta) \right)^+ \, d \mu (\zeta) \; .
\end{equation}
Here $x,y$ are generators of $\Gamma$ with central commutator $z = (y,x)$ and $a (y,z)$ is a polynomial in the commuting elements $y$ and $z$. By $\mu$ we denote the Haar probability measure on the circle $S^1$. For more general noncommutative polynomials in $x,y,z$ there is a similar formula obtained from \eqref{eq:1} which expresses the logarithmic Fuglede--Kadison determinant as an integral over $S^1$ of sums of nonzero Ljapunov exponents, c.f. theorem \ref{t11} below. Formula \eqref{eq:4} and its generalization are originally due to Lind and Schmidt who proved them by two methods which are quite different from the one in the present paper. On the one hand by \cite{DS}, theorem 6.1,  $\det_{\Nh\Gamma}$ can be calculated as a limit of renormalized finite dimensional determinants and the proof of \eqref{eq:4} in \cite{LS} is based on this fact. On the other hand, in \cite{D} and \cite{DS} the entropy $h$ of certain $\Gamma$-actions was expressed in terms of Fuglede--Kadison determinants. Using another method to calculate the entropy, Lind and Schmidt expressed $h$ in terms of the double integral in \eqref{eq:4} and more generally as an integral over Ljapunov exponents. 

The present paper was motivated by the desire to understand the work of Lind and Schmidt from the point of view of von~Neumann algebras. In particular, I wanted to see via functional analysis how the Ljapunov exponents are related to Fuglede--Kadison determinants.

As we show in \S\,\ref{s3} the proof of \eqref{eq:2} and hence of \eqref{eq:4} is not so difficult for continuous functions $a$ if the action is uniquely ergodic as for the irrational rotations that appear in the analysis of the Heisenberg group. Under this assumption one can make use of the uniform convergence in the Birkhoff ergodic theorem. However even unique ergodicity would not help in the proof of \eqref{eq:1} since it does not imply uniform convergence in the Oseledets multiplicative ergodic theorem. Instead we use a method from Margulis' proof of the multiplicative ergodic theorem in \cite{M} to reduce formula \eqref{eq:1} to formula \eqref{eq:2} for measurable functions $a$. Using an automorphism  of $\Rh$ we may assume that $a$ is non-negative. In this case formula \eqref{eq:2} is equivalent to a result on the Brown measure of $aU$ which was proved in \cite{DSch} based on the results of \cite{HSch1}.

I would like to thank Doug Lind and Klaus Schmidt very much for explaining their work to me which motivated the present paper. I am also grateful Siegfried Echterhoff for useful discussions and to the referee for helpful remarks. Part of this research was done at Keio university and the Erwin Schr\"odinger Institute. I am very grateful to these institutions and to my hosts Masanori Katsurada and Klaus Schmidt for their invitations and support.
\section{Determinant and Brown measure} \label{s2}
Let $\Mh$ be a finite von~Neumann algebra with a faithful normal finite trace $\tau$. For an operator $\Phi$ in $\Mh$ let $E_{\lambda} = E_{\lambda} (|\Phi|)$ be the spectral resolution of the selfadjoint operator $|\Phi| = (\Phi^* \Phi)^{1/2}$. Note that $|\Phi|$ and $E_{\lambda}$ lie in $\Mh$. The Fuglede--Kadison determinant $\det_{\Mh} \Phi \ge 0$ of $\Phi$ is defined by the integral:
\begin{equation}\label{eq:5}
\log \ddet_{\Mh} \Phi = \int^{\infty}_0 \log \lambda \, d\tau (E_{\lambda}) \quad \mbox{in} \; \R \cup \{ - \infty \} \; .
\end{equation}
Its definition can be reduced to the case of invertible operators by the formula:
\begin{equation} \label{eq:6}
\ddet_{\Mh} \Phi = \lim_{\varepsilon \to 0+} \ddet_{\Mh} (|\Phi| + \varepsilon) \; .
\end{equation}
It is a nontrivial fact that the determinant is multiplicative:
\begin{equation} \label{eq:7}
\ddet_{\Mh} \Phi_1 \Phi_2 = \ddet_{\Mh} \Phi_1  \; \ddet_{\Mh} \Phi_2 \quad \mbox{for} \; \Phi_1 , \Phi_2 \; \mbox{in} \; \Mh \; .
\end{equation}
The proofs can be found in \cite{FK} \S\,5, \cite{Di} I.6.11 and in greater generality in \cite{B} and \cite{HSch2}. For an easy example consider the abelian von~Neumann algebra $\Zh = L^{\infty} (\Omega)$ attached to a finite measure space $(\Omega , \Ah , \mu)$ equipped with the trace $\tau (\Phi) = \int_{\Omega} \Phi (\omega) \, d\mu (\omega)$. Then we have for any $\Phi$ in $\Zh$:
\begin{equation} \label{eq:8}
\ddet_{\Zh} \Phi = \exp \int_{\Omega} \log |\Phi (\omega)| \, d\mu (\omega) \; .
\end{equation}
In \cite{B} p. 19 Brown proved that for any $\Phi$ in $\Mh$ there is a unique Borel measure $\mu_{\Phi}$ on $\C$ with support contained in the spectrum $\sigma (\Phi)$ of $\Phi$ such that for all $z \in \C$ we have:
\begin{equation} \label{eq:9}
\log \ddet_{\Mh} (z - \Phi) = \int_{\C} \log |z-\lambda| \, d\mu_{\Phi} (\lambda) \; .
\end{equation}
The volume is given by
\begin{equation} \label{eq:10}
\mu_{\Phi} (\C) = \mu_{\Phi} (\sigma (\Phi)) = \tau (1) \; .
\end{equation}
For example, it follows easily from equation \eqref{eq:8} that in the abelian case $\Zh = L^{\infty} (\Omega)$, we have $\mu_{\Phi} = \Phi_* \mu$ if we represent $\Phi$ in $\Zh$ by a measurable map $\Phi : \Omega \to \C$. In general however it is quite difficult to calculate the Brown measure. We refer to \cite{HSch2} and its references for cases where this has been achieved.

In the next proposition, $r_{\sigma}$ denotes the spectral radius.

\begin{prop} \label{t1}
For an operator $\Phi$ in a finite von~Neumann algebra $\Mh$ with a trace $\tau$ as above the following formulas hold:
\begin{equation} \label{eq:11}
\ddet_{\Mh} (z - \Phi) = |z|^{\tau (1)} \quad \mbox{if} \; r_{\sigma} (\Phi) < |z| \; \mbox{and} \; \tau (\Phi^{\nu}) = 0
\end{equation}
for all $\nu \ge 1$. 
\begin{equation} \label{eq:12}
\ddet_{\Mh} (z - \Phi) = \ddet_{\Mh} \Phi \quad \mbox{if $\Phi$ is invertible} \; , \; r_{\sigma} (\Phi^{-1})^{-1} > |z|
\end{equation}
and $\tau (\Phi^{\nu}) = 0$ for all $\nu \le -1$. 
\end{prop}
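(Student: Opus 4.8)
The plan is to deduce both identities from a single observation: if $\Psi \in \Mh$ satisfies $r_{\sigma}(\Psi) < 1$ and $\tau(\Psi^{\nu}) = 0$ for all $\nu \ge 1$, then $\ddet_{\Mh}(1 - \Psi) = 1$. Granting this, formula \eqref{eq:11} follows by writing $z - \Phi = z(1 - \Phi/z)$ and combining multiplicativity \eqref{eq:7} with the scalar value $\ddet_{\Mh}(z \cdot 1) = |z|^{\tau(1)}$, which is immediate from the spectral definition \eqref{eq:5} (or from \eqref{eq:8}); here $\Psi = \Phi/z$ has $r_{\sigma}(\Psi) = r_{\sigma}(\Phi)/|z| < 1$ and $\tau(\Psi^{\nu}) = z^{-\nu}\tau(\Phi^{\nu}) = 0$. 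Likewise, formula \eqref{eq:12} follows by writing $z - \Phi = -\Phi(1 - z\Phi^{-1})$, using \eqref{eq:7} together with $\ddet_{\Mh}(-\Phi) = \ddet_{\Mh}\Phi$ (since $|-\Phi| = |\Phi|$); now $\Psi = z\Phi^{-1}$ satisfies $r_{\sigma}(\Psi) = |z|\, r_{\sigma}(\Phi^{-1}) < 1$ and $\tau(\Psi^{\nu}) = z^{\nu}\tau(\Phi^{-\nu}) = 0$, because $-\nu \le -1$.

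To establish the observation I would set $X = -\sum_{\nu \ge 1}\Psi^{\nu}/\nu$. Since the power series of $\log(1 - w)$ has radius of convergence $1 > r_{\sigma}(\Psi)$, and the Gelfand formula gives $\lim_{\nu}\|\Psi^{\nu}\|^{1/\nu} = r_{\sigma}(\Psi)$, the terms decay geometrically and the series converges in operator norm; holomorphic functional calculus then yields $\exp(X) = 1 - \Psi$, the principal logarithm being legitimate because $\sigma(1 - \Psi)$ lies in the disc of radius $1$ about $1$ and so avoids $(-\infty, 0]$. I would next invoke the exponential formula $\ddet_{\Mh}\exp(X) = \exp(\mathrm{Re}\,\tau(X))$ for the Fuglede--Kadison determinant, established in \cite{FK}. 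Finally, the norm-continuity of $\tau$ permits termwise evaluation, $\tau(X) = -\sum_{\nu \ge 1}\tau(\Psi^{\nu})/\nu = 0$, whence $\ddet_{\Mh}(1 - \Psi) = \exp(0) = 1$.

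The hard part is the passage between the scalar quantity $\log\ddet_{\Mh}$ and the trace of the operator logarithm, that is, justifying $\ddet_{\Mh}\exp(X) = \exp(\mathrm{Re}\,\tau(X))$ for a non-self-adjoint $X$. For self-adjoint $X$ this is immediate from \eqref{eq:5} by functional calculus, but in general $|\exp(X)|^{2} = \exp(X^{*})\exp(X)$ need not equal $\exp(X^{*} + X)$, so one must argue along the path $t \mapsto \exp(tX)$ via the derivative identity $\tfrac{d}{dt}\log\ddet_{\Mh}\exp(tX) = \mathrm{Re}\,\tau(X)$ with initial value $\ddet_{\Mh}(1) = 1$; this is precisely the nontrivial input borrowed from \cite{FK}. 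A secondary point demanding care is that the hypotheses bound only the spectral radius of $\Psi$, not its norm, which is exactly why I rely on holomorphic functional calculus rather than naive norm convergence of the logarithm series. (Alternatively, one could run the argument through the Brown measure \eqref{eq:9}, expanding the Cauchy transform $\int_{\C}(z - \lambda)^{-1}\,d\mu_{\Phi}(\lambda)$ for $|z| > r_{\sigma}(\Phi)$ and using the moment identity $\int_{\C}\lambda^{\nu}\,d\mu_{\Phi} = \tau(\Phi^{\nu})$; this route, however, ultimately rests on the same relation between $\log\ddet_{\Mh}$ and $\mathrm{Re}\,\tau(\log(\cdot))$.)
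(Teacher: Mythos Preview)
Your argument is correct and follows essentially the same route as the paper: reduce to showing $\ddet_{\Mh}(1-\Psi)=1$ when $r_{\sigma}(\Psi)<1$ and $\tau(\Psi^{\nu})=0$, form the norm-convergent logarithm $X=-\sum_{\nu\ge 1}\Psi^{\nu}/\nu$, and appeal to the identity $\ddet_{\Mh}\exp(X)=\exp(\mathrm{Re}\,\tau(X))$. The paper cites Dixmier's Lemma (\cite{Di}, p.~121, Lemma~4) for this last step in the equivalent form $\tau(\log(\exp X^{*}\exp X))=\tau(X^{*})+\tau(X)$, where you cite \cite{FK} directly; otherwise the proofs coincide.
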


\begin{proof}
In the situation of \eqref{eq:11}, we have
\begin{equation} \label{eq:13}
\ddet_{\Mh} (z - \Phi) = \ddet_{\Mh} z \; \ddet_{\Mh} (1 - z^{-1} \Phi) = |z|^{\tau (1)} \ddet_{\Mh} (1 - z^{-1} \Phi)
\end{equation}
and $r_{\sigma} (z^{-1} \Phi) < 1$. Set $A = z^{-1} \Phi$. Since $r_{\sigma} (A) < 1$, the series
\[
B = \log (1 -A) = - \sum^{\infty}_{\nu=1} \frac{A^{\nu}}{\nu}
\]
converges in the norm of $\Mh$ and we have $\exp B = 1 - A$ in $\Mh$. According to \cite{Di} Lemma 4, p. 121, we have
\begin{equation} \label{eq:14}
\tau (\log (\exp B^* \exp B)) = \tau (B^*) + \tau (B) \; .
\end{equation}
Now, 
\begin{eqnarray*}
2 \log \ddet_{\Mh} (1 -A) & = & \tau (\log (1 - A^*) (1 -A)) \\
 & = & \tau (\log (\exp B^* \exp B)) \\
 & = & \tau (B^*) + \tau (B) \; .
\end{eqnarray*}
Because of $\tau (\Phi^{\nu}) = 0$ for $\nu \ge 1$ by assumption, we obtain $\tau (B) = 0$ and $\tau (B^*) = \overline{\tau (B)} = 0$. Hence we have $\log \det_{\Mh} (1 -A) = 0$ and formula \eqref{eq:13} implies the assertion. Formula \eqref{eq:12} follows from formula \eqref{eq:11} if we write
\[
\ddet_{\Mh} (z - \Phi) = \ddet_{\Mh} (\Phi) \ddet_{\Mh} (1 - z \Phi^{-1}) \; .
\]
Note that the spectral radius of an invertible operator is positive. 
\end{proof}

\begin{cor} \label{t2}
For $\Mh$ and $\tau$ as above, let $\Phi$ be an invertible operator in $\Mh$ with $\tau (\Phi^{\nu}) = 0$ for all $\nu \neq 0$ and $r_{\sigma} (\Phi^{-1}) = r_{\sigma} (\Phi)^{-1}$. Then we have $\det_{\Mh} \Phi = r_{\sigma} (\Phi)^{\tau (1)}$ and
\begin{equation} \label{eq:15}
\ddet_{\Mh} (z - \Phi) = \max (|z|^{\tau (1)} , \ddet_{\Mh} \Phi) \quad \mbox{if} \; \ddet_{\Mh} \Phi \neq |z|^{\tau (1)} \; .
\end{equation}
The condition $\det_{\Mh} \Phi \neq |z|^{\tau (1)}$ implies that $z - \Phi$ is invertible. The converse holds if the spectrum of $\Phi$ is rotation invariant.
\end{cor}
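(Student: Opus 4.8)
The plan is to use the two spectral-radius hypotheses to confine the spectrum of $\Phi$ to a single circle, after which every assertion drops out of Proposition \ref{t1} together with the Brown measure formula \eqref{eq:9}. Write $r = r_{\sigma}(\Phi)$; since $\Phi$ is invertible its spectral radius is positive, so $r > 0$. The first step is to observe that $\sigma(\Phi)$ is contained in the circle $\{\lambda : |\lambda| = r\}$. By definition of the spectral radius $\sigma(\Phi) \subseteq \{|\lambda| \le r\}$, while the spectral mapping theorem gives $\sigma(\Phi^{-1}) = \{\lambda^{-1} : \lambda \in \sigma(\Phi)\}$, so the hypothesis $r_{\sigma}(\Phi^{-1}) = r^{-1}$ forces $|\lambda^{-1}| \le r^{-1}$, i.e. $|\lambda| \ge r$, for every $\lambda \in \sigma(\Phi)$. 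Hence $\sigma(\Phi) \subseteq \{|\lambda| = r\}$, and by \eqref{eq:9} and \eqref{eq:10} the Brown measure $\mu_{\Phi}$ is a measure of total mass $\tau(1)$ supported on this circle.

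To evaluate $\det_{\Mh} \Phi$ I would apply \eqref{eq:9} at $z = 0$. Since $|-\Phi| = |\Phi|$ we have $\det_{\Mh} \Phi = \det_{\Mh}(0 - \Phi)$, and \eqref{eq:9} gives
\[
\log \det_{\Mh} \Phi = \int_{\C} \log|\lambda|\, d\mu_{\Phi}(\lambda) = \log r \cdot \mu_{\Phi}(\C) = \tau(1) \log r,
\]
because $\mu_{\Phi}$ lives on $\{|\lambda| = r\}$; this yields $\det_{\Mh} \Phi = r^{\tau(1)} = r_{\sigma}(\Phi)^{\tau(1)}$, the first claim.

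For formula \eqref{eq:15} I would then argue by cases on $|z|$, noting that (as $\tau(1) > 0$ and $r > 0$) the hypothesis $\det_{\Mh}\Phi \neq |z|^{\tau(1)}$ is equivalent to $|z| \neq r$, and that the blanket assumption $\tau(\Phi^{\nu}) = 0$ for all $\nu \neq 0$ supplies exactly the vanishing-moment conditions required by \eqref{eq:11} ($\nu \ge 1$) and by \eqref{eq:12} ($\nu \le -1$). If $|z| > r = r_{\sigma}(\Phi)$, then \eqref{eq:11} gives $\det_{\Mh}(z - \Phi) = |z|^{\tau(1)} > r^{\tau(1)} = \det_{\Mh}\Phi$; if $|z| < r = r_{\sigma}(\Phi^{-1})^{-1}$, then \eqref{eq:12} gives $\det_{\Mh}(z - \Phi) = \det_{\Mh}\Phi = r^{\tau(1)} > |z|^{\tau(1)}$. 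In both regimes the right-hand side of \eqref{eq:15} is reproduced, and the excluded case $|z| = r$ is precisely the one ruled out by hypothesis.

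Finally, the invertibility statements follow from the opening observation. If $\det_{\Mh}\Phi \neq |z|^{\tau(1)}$, i.e. $|z| \neq r$, then $z \notin \{|\lambda| = r\} \supseteq \sigma(\Phi)$, so $z - \Phi$ is invertible. For the converse under rotation invariance, a nonempty rotation-invariant subset of the circle $\{|\lambda| = r\}$ must be the whole circle; since $\sigma(\Phi)$ is nonempty and contained in that circle, rotation invariance gives $\sigma(\Phi) = \{|\lambda| = r\}$, whence invertibility of $z - \Phi$ forces $z \notin \sigma(\Phi)$, i.e. $|z| \neq r$, i.e. $\det_{\Mh}\Phi \neq |z|^{\tau(1)}$. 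The only step requiring an idea rather than bookkeeping is the initial observation that the two spectral-radius conditions pin $\sigma(\Phi)$ to a single circle; once that is in hand Proposition \ref{t1} and the Brown measure do the rest, and I expect no obstacle beyond keeping track of the boundary case $|z| = r$.
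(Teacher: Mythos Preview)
Your proof is correct and follows essentially the same path as the paper: pin $\sigma(\Phi)$ to the circle of radius $r = r_\sigma(\Phi)$, read off \eqref{eq:15} case by case from Proposition~\ref{t1}, and deduce the invertibility statements from the description of the spectrum. The one genuine difference is how you obtain $\det_{\Mh}\Phi = r^{\tau(1)}$. The paper does this without the Brown measure: from the elementary bound $\det_{\Mh}\Psi \le \|\Psi\|^{\tau(1)}$ and multiplicativity it gets $\det_{\Mh}\Phi = \lim_n (\det_{\Mh}\Phi^n)^{1/n} \le r_\sigma(\Phi)^{\tau(1)}$, and the same applied to $\Phi^{-1}$ together with the hypothesis $r_\sigma(\Phi^{-1}) = r_\sigma(\Phi)^{-1}$ yields the reverse inequality. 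Your route via \eqref{eq:9} at $z=0$ is equally valid and arguably cleaner once Brown's theorem is on the table; the paper's argument has the minor advantage of staying entirely within the Fuglede--Kadison calculus and not invoking the existence of the Brown measure.
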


\begin{proof}
We have $\det_{\Mh} \Phi \le \| \Phi \|^{\tau (1)}$ since $|\Phi|$ has the same norm as $\Phi$. It follows that
\[
\ddet_{\Mh} \Phi = \lim_{n\to \infty} (\ddet_{\Mh} \Phi^n)^{1/n} \le \lim_{n\to \infty} \| \Phi^n \|^{\tau (1) / n} = r_{\sigma} (\Phi)^{\tau (1)} \; .
\]
Applied to $\Phi^{-1}$ we get
\[
\ddet_{\Mh} \Phi^{-1} \le r_{\sigma} (\Phi^{-1})^{\tau (1)} \quad \mbox{hence} \quad \ddet_{\Mh} \Phi \ge r_{\sigma} (\Phi^{-1})^{-\tau(1)} \; .
\]
Since we assumed that $r_{\sigma} (\Phi^{-1}) = r_{\sigma} (\Phi)^{-1}$, the equation $\det_{\Mh} \Phi = r_{\sigma} (\Phi)^{\tau (1)}$ follows. Now formula \eqref{eq:15} is a consequence of proposition \ref{t1}. Because of the formula $r_{\sigma} (\Phi) = \sup |\lambda| , \lambda \in \sigma (\Phi)$ the assumption $r_{\sigma} (\Phi^{-1}) = r_{\sigma} (\Phi)^{-1}$ means that all $\lambda \in \sigma (\Phi)$ have the same absolute value $|\lambda| = r_{\sigma} (\Phi)$. The condition that $r_{\sigma} (\Phi)^{\tau (1)} = \det_{\Mh} \Phi$ be different from $|z|^{\tau (1)}$ therefore implies that $z$ is not in the spectrum of $\Phi$. If on the other hand $\det_{\Mh} \Phi = |z|^{\tau (1)}$, it follows that $|z| = r_{\sigma} (\Phi)$. Assuming that $\sigma (\Phi)$ is rotation invariant we have $\sigma (\Phi) = \{ |\lambda| = r_{\sigma} (\Phi) \}$ because $\sigma (\Phi)$ is non-empty. Hence $z \in \sigma (\Phi)$.
\end{proof}

\section{Semidirect products, automorphisms and determinants} \label{s3}

In this section we construct certain automorphisms of semidirect products and use them to reduce formula \eqref{eq:2} to a result in \cite{DSch}. We also give two elementary proofs of \eqref{eq:2} in the uniquely ergodic case using the results from section \ref{s2}. Useful references are \cite{KR} 2.6.13, 8.6 and 13.1.

Let $\gamma$ be an ergodic measure preserving automorphism of a probability space $(\Omega , \Ah , P)$. Consider the $\C$-vector spaces $\Kh_0 \subset \Kh \subset \oKh$ where $\Kh_0 = \bigoplus_{i \in \Z} L^2 (\Omega) U^i$ and $\oKh = \prod_{i \in \Z} L^2 (\Omega) U^i$ and $\Kh$ the Hilbert space completion of $\Kh_0$ with respect to the scalar product
\[
\left( \nsum_i x_i U^i \; , \; \nsum_j y_j U^j \right) = \nsum_i (x_i , y_i) \; .
\]
Here the $U^i$ are formal variables which serve to distinguish the different copies of $L^2 (\Omega)$. We view $L^2 (\Omega)$ as a sub-Hilbert space of $\Kh$ by mapping $x$ to $x U^0$. Note that $\Kh$ is isomorphic to the Hilbert space $L^2 (\Omega) \hat{\otimes} L^2 (\Z)$. 

A formal series $\Phi = \nsum_{i \in \Z} a_i U^i$ with $a_i$ in $L^{\infty} (\Omega)$ defines a linear map by left multiplication $\Phi : \Kh_0 \to \oKh$. This is done using the rule
\begin{equation} \label{eq:16}
(a U^i) (x U^j) = a (x \verk \gamma^i) U^{i+j}
\end{equation}
for $a \in L^{\infty} (\Omega) , x \in L^2 (\Omega)$ noting that $x \verk \gamma^i$ lies in $L^2 (\Omega)$. Thus we set:
\begin{equation} \label{eq:17}
\Phi \left( \nsum_j x_j U^j \right) = \nsum_n \left( \nsum_{i+j=n} a_i (x_j \verk \gamma^i) \right) U^n \; .
\end{equation}
This is well defined since the inner sums on the right are finite, almost all $x_j$ being zero. Applying $\Phi$ to $U^0 \in \Kh_0$ one sees that the operator \eqref{eq:17} of left multiplication by $\Phi$ determines the formal series for $\Phi$ i.e. the $a_i$'s uniquely. One therefore identifies the formal series with the corresponding operator. The semidirect product $\Rh = L^{\infty} (\Omega) \rtimes_{\gamma} \Z$ is the von~Neumann algebra in $\Bh (\Kh)$ consisting of all such $\Phi$ with $\Phi (\Kh_0) \subset \Kh$ and such that $\Phi : \Kh_0 \to \Kh$ extends to a bounded operator $\Phi$ on $\Kh$. The finite formal series $\Phi$ form a subalgebra of $\Rh$ which is dense in the strong topology of $\Rh$. The von~Neumann algebra $\Rh$ is equipped with the faithful normal finite trace $\tau$ with $\tau (1) = 1$ defined by
\begin{equation} \label{eq:18}
\tau (\Phi) = \int_{\Omega} a_0 (\omega) dP (\omega) = (\Phi U^0 , U^0) \; .
\end{equation}
The algebra $\Rh$ contains the algebra $L^{\infty} (\Omega)$ canonically and $\tau \, |_{L^{\infty} (\Omega)}$ is the integral on $L^{\infty} (\Omega)$. 

In order to define certain unitary operators on $\Kh$ we observe the following facts. For an element $c$ in $L^{\infty} (\Omega)^{\times} = \GL_1 (L^{\infty} (\Omega))$ consider the extension of $c$ to a cocycle:
\begin{equation} \label{eq:19}
\begin{array}{lrcll}
 & c_i (\omega) & = & c (\omega) \cdots c (\gamma^{i-1} (\omega)) & \mbox{for} \; i > 0 \\
& c_i (\omega) & = & c (\gamma^{-1} (\omega))^{-1} \cdots c (\gamma^i (\omega))^{-1} & \mbox{for} \; i < 0 \\
\mbox{and} & c_0 (\omega) & = & 1 \; .
\end{array}
\end{equation}
Then we have the cocycle relations
\begin{equation} \label{eq:20}
c_{i+j} = c_i (c_j \verk \gamma^i) \quad \mbox{for all} \; i,j \in \Z \; .
\end{equation}
Let $L^{\infty} (\Omega , S^1)$ be the subgroup of $L^{\infty} (\Omega)^{\times}$ represented by $S^1$-valued functions. If $c$ is in $L^{\infty} (\Omega , S^1)$ the functions $c_i$ are in $L^{\infty} (\Omega , S^1)$ as well. Hence we can define a unitary operator $Y_c$ on $\Kh$ by setting:
\begin{equation} \label{eq:21}
Y_c \left( \nsum_i x_i U^i \right) = \nsum_i x_i c_i U^i \; .
\end{equation}
It is clear that $Y^*_c = Y_{\oc}$.

The following proposition must be well known but we give the short proof anyhow.

\begin{prop} \label{t3}
For $c$ in $L^{\infty} (\Omega , S^1)$, conjugation with $Y_c$ on $\Bh (\Kh)$ restricts to a
$^*$\nobreakdash-auto\-mor\-phism $\alpha_c$ of $\Rh$. For $\Phi = \nsum_i a_i U^i$ we have
\[
\alpha_c (\Phi) = \nsum_i a_i c_i U^i \; .
\]
Moreover, $\tau (\alpha_c (\Phi)) = \tau (\Phi)$ and $\det_{\Rh} \alpha_c (\Phi) = \det_{\Rh} \Phi$. The Brown measures of $\alpha_c (\Phi)$ and $\Phi$ coincide.
\end{prop}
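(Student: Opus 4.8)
The plan is to exploit that $Y_c$ is a unitary, so that conjugation $\Phi \mapsto Y_c \Phi Y_c^*$ is automatically a $^*$-automorphism of all of $\Bh(\Kh)$; the real content of the proposition is only that this automorphism preserves the subalgebra $\Rh$ and is given there by the stated formula. Since the formal series of an operator in $\Rh$ is recovered by applying it to $U^0$, and finite series are strongly dense, it suffices to compute the conjugation on a single monomial $aU^i$ and then extend by linearity and strong continuity.

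The heart of the matter is this monomial computation, which I would carry out by evaluating $Y_c(aU^i)Y_c^*$ on a vector $xU^j$ with $x \in L^2(\Omega)$. Using $Y_c^* = Y_{\oc}$ together with $(\oc)_j = \overline{c_j}$, the operator $Y_c^*$ sends $xU^j$ to $x\overline{c_j}U^j$; applying $aU^i$ via the rule \eqref{eq:16} produces $a\,(x\verk\gamma^i)\,(\overline{c_j}\verk\gamma^i)U^{i+j}$; and finally $Y_c$ multiplies the coefficient by $c_{i+j}$. The decisive algebraic collapse is that, by the cocycle relation \eqref{eq:20} and $|c_j|=1$, one has $\overline{c_j\verk\gamma^i}\,c_{i+j} = \overline{c_j\verk\gamma^i}\,c_i\,(c_j\verk\gamma^i) = c_i$, so the whole expression equals $(ac_i)(x\verk\gamma^i)U^{i+j} = (ac_iU^i)(xU^j)$. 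Hence $Y_c(aU^i)Y_c^* = ac_iU^i$, and summing gives $\alpha_c(\Phi) = \nsum_i a_ic_iU^i$. This bookkeeping with the cocycle and its conjugate is the step I expect to require the most care.

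Granting the formula, the remaining assertions follow formally. Because $c_i \in L^{\infty}(\Omega,S^1)$ is bounded, each coefficient $a_ic_i$ again lies in $L^{\infty}(\Omega)$, so $\alpha_c(\Phi)\in\Rh$; since $Y_c^{-1}=Y_{\oc}$, the map $\alpha_{\oc}$ is a two-sided inverse, and $\alpha_c$ is a $^*$-automorphism of $\Rh$. For the trace I would note that $Y_cU^0 = c_0U^0 = U^0$ because $c_0=1$, whence by \eqref{eq:18} one has $\tau(\alpha_c(\Phi)) = (Y_c\Phi Y_c^*U^0,U^0) = (\Phi U^0,U^0) = \tau(\Phi)$; equivalently, the zeroth coefficient $a_0c_0 = a_0$ is unchanged.

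Finally, for the determinant and Brown measure I would use that $\alpha_c$ is a trace-preserving $^*$-automorphism implemented by a unitary. From $\alpha_c(\Phi)^*\alpha_c(\Phi) = Y_c\Phi^*\Phi Y_c^* = \alpha_c(|\Phi|^2)$ one gets $|\alpha_c(\Phi)| = Y_c|\Phi|Y_c^*$, so the spectral resolution is $E_\lambda(|\alpha_c(\Phi)|) = Y_cE_\lambda(|\Phi|)Y_c^*$ and $\tau(E_\lambda(|\alpha_c(\Phi)|)) = \tau(E_\lambda(|\Phi|))$; the defining integral \eqref{eq:5} is therefore identical for the two operators, giving $\det_\Rh\alpha_c(\Phi) = \det_\Rh\Phi$. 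For the Brown measure I would observe that $\alpha_c$ fixes scalars (again since $c_0=1$), so $z-\alpha_c(\Phi) = \alpha_c(z-\Phi)$ and hence $\det_\Rh(z-\alpha_c(\Phi)) = \det_\Rh(z-\Phi)$ for every $z\in\C$; by the uniqueness of the measure characterized in \eqref{eq:9}, the Brown measures of $\alpha_c(\Phi)$ and $\Phi$ coincide.
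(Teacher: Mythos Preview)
Your proof is correct and follows essentially the same route as the paper: the key step in both is the monomial computation $Y_c(aU^i)Y_c^*(xU^j)$ collapsing via the cocycle identity \eqref{eq:20} to $(ac_iU^i)(xU^j)$, and the remaining claims about trace, determinant, and Brown measure are then formal consequences of $\alpha_c$ being a trace-preserving unitary conjugation. The only cosmetic differences are that the paper carries out the computation for a general $\Phi$ directly on $\Kh_0$ (rather than on monomials followed by your density/continuity extension) and deduces the determinant equality from formula \eqref{eq:6} via $\tau\log(|\Phi|+\varepsilon)=\tau\log(|\alpha_c(\Phi)|+\varepsilon)$ instead of your spectral-resolution argument based on \eqref{eq:5}.
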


\begin{proof}
The operator $Y^*_c$ maps $\Kh_0$ to itself. We first show that we have 
\begin{equation} \label{eq:22}
Y_c \verk \Phi \verk Y^*_c = \Phi_c := \nsum_i a_i c_i U^i
\end{equation}
as operators on $\Kh_0$. Because of equation \eqref{eq:17} this follows from the next equations which hold for all $a \in L^{\infty} (\Omega) , x \in L^2 (\Omega) , i,j \in \Z$
\begin{eqnarray} \label{eq:23}
(Y_c \verk aU^i \verk Y_{\oc}) (x U^j) & = & a (x \verk \gamma^i) (\oc_j \verk \gamma^i) c_{i+j} U^{i+j} \\
& \overset{\eqref{eq:20}}{=} & a (x \verk \gamma^i) c_i U^{i+j} \nonumber\\
& = & a c_i (x \verk \gamma^i) U^{i+j} \nonumber\\
& = & (ac_i U^i) (x U^j) \; .\nonumber
\end{eqnarray}
Note that we had to use that $L^{\infty} (\Omega)$ is abelian.

Equation \eqref{eq:22} shows that $\Phi_c (\Kh_0) \subset \Kh$. Moreover $\Phi_c : \Kh_0 \to \Kh$ is bounded since $\Phi$ is bounded and $Y_c$ is unitary. Hence $\Phi_c$ is an element of $\Rh$ and relation \eqref{eq:22} holds  on all of $\Kh$. Hence conjugation by $Y_c$ on $\Bh (\Kh)$ restricts to a $^*$-automorphism $\alpha_c$ on $\Rh$ and we have $\alpha_c (\Phi) = \Phi_c$ for every $\Phi$ in $\Rh$. It is clear that we have $\tau (\Phi_c) = \tau (\Phi)$. Since conjugation by $Y_c$ is an operator norm preserving $^*$\nobreakdash-automorphism of $\Bh (\Kh)$ we get
\[
\tau \log (|\Phi| + \varepsilon) = \tau \alpha_c (\log (|\Phi| + \varepsilon)) = \tau \log (|\alpha_c (\Phi)| + \varepsilon) \; .
\]
The equality $\det_{\Rh} (\Phi) = \det_{\Rh} (\alpha_c (\Phi))$ follows by letting $\varepsilon$ tend to zero and using formula \eqref{eq:6}. Now equality of Brown measures is immediate.
\end{proof}

For polynomials in $U$ with constant coefficients the Fuglede--Kadison determinant can be expressed in terms of Mahler measures.

\begin{prop} \label{t4}
For a polynomial $f (T) = \nsum a_i T^i$ in $\C [T]$ consider the element $\Phi = f (U) = \nsum a_i U^i$ of $\Rh$. Then we have
\begin{equation} \label{eq:24}
\ddet_{\Rh} \Phi = \exp \int_{S^1} \log |f (\zeta)| \, d\mu (\zeta) \; .
\end{equation}
\end{prop}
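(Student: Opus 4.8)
The plan is to exploit that $\Phi = f(U)$ lies in the abelian von~Neumann subalgebra $W^{*}(U) \subset \Rh$ generated by the single unitary $U$, and to identify the spectral distribution of $U$ with the Haar measure $\mu$ on $S^1$. First I would observe that $W^{*}(U)$ is a von~Neumann algebra containing both $\Phi = f(U)$ and $\Phi^{*} = \overline{f}(U^{-1})$, so that the selfadjoint operator $|\Phi| = (\Phi^{*}\Phi)^{1/2}$ and all of its spectral projections $E_{\lambda}$ lie in $W^{*}(U)$ as well. Hence the defining integral \eqref{eq:5} involves only the values of $\tau$ on $W^{*}(U)$, and therefore $\ddet_{\Rh} \Phi = \ddet_{W^{*}(U)} \Phi$, the latter determinant being formed with the restricted trace $\tau \, |_{W^{*}(U)}$.

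Next I would determine the pair $(W^{*}(U), \tau \, |_{W^{*}(U)})$ explicitly. Since $U$ is unitary, the spectral theorem provides an isomorphism of von~Neumann algebras $W^{*}(U) \cong L^{\infty}(S^1 , \nu)$ carrying $U$ to the coordinate function $\zeta \mapsto \zeta$ and $\tau \, |_{W^{*}(U)}$ to integration against the spectral distribution $\nu$ of $U$. To pin down $\nu$ I would compute its moments. For $n \neq 0$ the element $U^{n} = 1 \cdot U^{n}$ has vanishing constant coefficient, so \eqref{eq:18} gives $\tau (U^{n}) = 0$, while $\tau (U^{0}) = \tau (1) = 1$. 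Thus
\[
\int_{S^1} \zeta^{n} \, d\nu (\zeta) = \tau (U^n) = \delta_{n,0} \quad \mbox{for all} \; n \in \Z ,
\]
and by the uniqueness in the trigonometric moment problem (density of trigonometric polynomials in $C(S^1)$) this forces $\nu = \mu$, the Haar probability measure on $S^1$.

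Finally, under this identification the element $\Phi = f(U)$ corresponds to the bounded measurable function $\zeta \mapsto f(\zeta)$ on $S^1$, so that formula \eqref{eq:8}, applied to the abelian von~Neumann algebra $L^{\infty}(S^1)$ with the trace $\int_{S^1} \cdot \, d\mu$, yields
\[
\ddet_{\Rh} \Phi = \ddet_{L^{\infty}(S^1)} f = \exp \int_{S^1} \log |f (\zeta)| \, d\mu (\zeta) ,
\]
which is precisely \eqref{eq:24}.

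The heart of the argument is the moment computation identifying the distribution of $U$ with Haar measure; once this is in place, the reduction to the abelian case via \eqref{eq:8} is automatic, and I expect this identification, rather than any estimate, to be the only real content. The one point requiring a little care is that $\log |f|$ is unbounded below when $f$ has zeros on $S^1$, but the corresponding integral still converges and is correctly accounted for by \eqref{eq:8} together with the limiting definition \eqref{eq:6}. Alternatively, one could run the same computation through the Brown measure: as $U$ is normal its Brown measure $\mu_U$ coincides with its spectral distribution $\mu$, and factoring $f$ into linear factors, then applying multiplicativity \eqref{eq:7} together with \eqref{eq:9}, reproduces the same formula.
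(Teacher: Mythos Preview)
Your proof is correct and follows essentially the same strategy as the paper: reduce to the abelian von~Neumann algebra generated by $U$, identify the spectral distribution of $U$ with respect to $\tau$ as the Haar measure $\mu$ on $S^1$, and then invoke the abelian formula \eqref{eq:8} (with the limiting relation \eqref{eq:6} to handle zeros of $f$ on $S^1$).

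The only noteworthy difference lies in \emph{how} the spectral distribution is identified with Haar measure. The paper argues via rotation invariance: using the automorphisms $\alpha_{\zeta}$ of Proposition~\ref{t3} it shows that $\nu(B) = \tau(\chi_B(U))$ satisfies $\nu(\zeta^{-1}B) = \nu(B)$, whence $\nu = \mu$. You instead compute the moments $\tau(U^n) = \delta_{n,0}$ directly from \eqref{eq:18} and appeal to the uniqueness in the trigonometric moment problem. Your route is slightly more self-contained, since it avoids the machinery of the $\alpha_c$'s; the paper's route is more in keeping with the surrounding development, where those automorphisms are used repeatedly. Both are standard and equally short.
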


\begin{proof}
The spectrum of the unitary operator $U$ on $\Kh$ is well known to be equal to $S^1$ because it is non-empty and rotation-invariant: For $\zeta$ in $S^1$ the automorphism $\alpha_{\zeta}$ from proposition \ref{t3} maps $U$ to $\zeta U$. Hence $U -\mu$ is invertible if and only if $\zeta U - \mu$ i.e. $U - \zeta^{-1} \mu$ is invertible. Consider the continuous function $g_{\varepsilon} (\zeta) = \log (|f (\zeta)| + \varepsilon)$ on $S^1$. According to \cite{KR} Theorem 5.2.8 and formula \eqref{eq:18} we have:
\begin{eqnarray*}
\tau g_{\varepsilon} (U) & = & (g_{\varepsilon} (U) U^0 , U^0) \\
 &= & \int_{S^1} g_{\varepsilon} (\zeta) \, d\nu (\zeta) \; .
\end{eqnarray*}
Here $\nu$ is the Borel measure on $S^1$ defined by $\nu (B) = (\chi_B (U) U^0 , U^0) = \tau (\chi_B (U))$ for all Borel sets $B$ of $S^1$. Here $\chi_B$ is the characteristic function of $B$. Thus we have $\nu (S^1) = \tau (1) = 1$ and
\[
\begin{array}{rclcl}
\nu (\zeta^{-1} B) & = & \tau \chi_{\zeta^{-1} B} (U) & = & \tau \chi_B (\zeta U) = \tau \chi_B (\alpha_{\zeta} (U)) = \tau (\alpha_{\zeta} (\chi_B (U))) \\
& = & \tau (\chi_B (U)) & = & \nu (B) \; .
\end{array}
\]
The probability measure $\nu$ is automatically regular and non-trivial i.e. $\nu (U) > 0$ for all non-empty open subsets of $S^1$. Hence $\nu$ is the Haar probability measure $d\mu (\zeta)$ on $S^1$ and we get the formula: 
\begin{equation} \label{eq:25}
\tau g_{\varepsilon} (U) = \int_{S^1} g_{\varepsilon} (\zeta) \, d\mu (\zeta) \; .
\end{equation}
Using \eqref{eq:6}, \eqref{eq:25} and Levi's theorem, we conclude:
\begin{eqnarray*}
\log \ddet_{\Rh} \Phi & = & \lim_{\varepsilon \to 0+} \tau \log (|f (U)| + \varepsilon) = \lim_{\varepsilon \to 0+} \tau g_{\varepsilon} (U) \\
& = & \lim_{\varepsilon \to 0+} \int_{S^1} \log (|f (\zeta)| + \varepsilon) \, d\mu (\zeta) \\
& = & \int_{S^1} \log |f (\zeta)| \, d\mu (\zeta) \; .
\end{eqnarray*}
An alternative proof using the Brown measure is also possible.
\end{proof}

We now give elementary proofs of formula \eqref{eq:2} under somewhat restrictive hypotheses.

\begin{prop} \label{t5}
Let $\Omega$ be a compact metrizable topological space and let $\gamma$ be a uniquely ergodic homeomorphism of $\Omega$ with invariant Borel probability measure $P$. Let $a = a (\omega)$ and $b = b (\omega)$ be nonvanishing continuous functions on $\Omega$. Then we have
\begin{equation} \label{eq:26}
\log \ddet_{\Rh} (1-aU) = \left( \int_{\Omega} \log |a (\omega)| \, dP (\omega) \right)^+
\end{equation}
and more generally
\begin{equation}
  \label{eq:27n}
  \log \ddet_{\Rh} (b-aU) = \max ( \int_{\Omega} \log |a (\omega)| \, dP(\omega) , \int_{\Omega} \log |b(\omega)| \, dP (\omega)) \; .
\end{equation}
\end{prop}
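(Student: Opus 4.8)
The plan is to apply Corollary \ref{t2} to the operator $\Phi = aU$, which is invertible because $a$ is nonvanishing and continuous on the compact space $\Omega$ (so $a^{-1}$ is bounded) and $U$ is unitary. First I would record the two algebraic inputs. Writing $a_n(\omega) = a(\omega)a(\gamma\omega)\cdots a(\gamma^{n-1}\omega)$ for the cocycle \eqref{eq:19} attached to $c=a$, the multiplication rule \eqref{eq:16} gives $(aU)^n = a_n U^n$, whose $U^0$-coefficient vanishes; hence $\tau((aU)^\nu)=0$ for every $\nu\neq 0$ by \eqref{eq:18}. Since $\gamma$ preserves $P$ and $U^n$ is unitary, one checks that $\|(aU)^n\| = \|a_n\|_\infty$, and, using $(aU)^{-n} = (a_n^{-1}\circ\gamma^{-n})U^{-n}$, that $\|(aU)^{-n}\| = \|a_n^{-1}\|_\infty$.

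The key analytic step, and the only place unique ergodicity enters, is the computation of the two spectral radii. Setting $I_a = \int_\Omega \log|a|\,dP$ and noting $\log|a_n(\omega)| = \sum_{k=0}^{n-1}\log|a(\gamma^k\omega)|$, the uniform form of Birkhoff's ergodic theorem for a uniquely ergodic homeomorphism, applied to the continuous function $\log|a|$, gives $\tfrac1n\log|a_n(\omega)|\to I_a$ uniformly in $\omega$. This uniform convergence forces $\tfrac1n\log\|a_n\|_\infty \to I_a$, i.e. $r_\sigma(aU)=e^{I_a}$, and the same argument applied to $a^{-1}$ gives $r_\sigma((aU)^{-1}) = e^{-I_a} = r_\sigma(aU)^{-1}$. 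Corollary \ref{t2} now applies and yields $\det_{\Rh}(aU)=e^{I_a}$ together with $\log\det_{\Rh}(z-aU) = \max(\log|z|,I_a)$ whenever $|z|\neq e^{I_a}$. Taking $z=1$ proves \eqref{eq:26} in the case $I_a\neq 0$.

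The main obstacle is the boundary case $I_a=0$, where $z=1$ lies on the critical circle $|z|=e^{I_a}$ and Corollary \ref{t2} does not directly apply. Here I would exploit rotation invariance of the spectrum: the automorphism $\alpha_\zeta$ of Proposition \ref{t3} attached to the constant cocycle $\zeta\in S^1$ sends $aU$ to $\zeta\,aU$, so $\sigma(aU)=\zeta\,\sigma(aU)$ for all $\zeta$. Combined with the conclusion of Corollary \ref{t2} that every point of $\sigma(aU)$ has modulus $r_\sigma(aU)=e^{I_a}$, this forces $\sigma(aU)$ to be the full circle of radius $e^{I_a}$. By \eqref{eq:9} the function $z\mapsto \log\det_{\Rh}(z-aU)$ is the logarithmic potential of the Brown measure $\mu_{aU}$; since it equals the function $\max(\log|z|,I_a)$, which is harmonic off that circle, and has total mass $\tau(1)=1$, its distributional Laplacian identifies $\mu_{aU}$ as normalized Haar measure on $\{|\lambda|=e^{I_a}\}$. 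Evaluating \eqref{eq:9} at an arbitrary $z$, including points on the circle, then gives $\log\det_{\Rh}(z-aU)=\max(\log|z|,I_a)$ by the classical computation $\int_{S^1}\log|1-\zeta|\,d\mu(\zeta)=0$; with $z=1$ and $I_a=0$ this completes \eqref{eq:26}.

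Finally, \eqref{eq:27n} follows from \eqref{eq:26} by multiplicativity \eqref{eq:7}. Since $b$ is nonvanishing I would factor $b-aU = b\,(1-b^{-1}aU)$ in $\Rh$, using that $b$ lies in the abelian subalgebra $L^\infty(\Omega)$, whence $\log\det_{\Rh}b = I_b := \int_\Omega \log|b|\,dP$ by \eqref{eq:8}. Applying \eqref{eq:26} to the nonvanishing continuous function $b^{-1}a$ gives $\log\det_{\Rh}(1-b^{-1}aU)=(I_a-I_b)^+$, and adding yields $\log\det_{\Rh}(b-aU)=I_b+(I_a-I_b)^+=\max(I_a,I_b)$, as required. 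Note this last reduction already invokes the boundary case of \eqref{eq:26} (when $I_a=I_b$), so that case is genuinely needed.
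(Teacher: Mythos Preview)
Your proof is correct and essentially follows the paper's second proof: apply Corollary~\ref{t2} after computing both spectral radii via the uniform Birkhoff theorem for the continuous function $\log|a|$, then extend the resulting identity across the critical circle $|z|=e^{I_a}$; the reduction of \eqref{eq:27n} to \eqref{eq:26} via $b-aU=b(1-b^{-1}aU)$ is identical to the paper's. The only difference is in how the extension is carried out: the paper simply invokes Riesz's result that two subharmonic functions agreeing off a Lebesgue-null set agree everywhere, whereas you take a short detour---identifying $\mu_{aU}$ as the distributional Laplacian of $\max(\log|z|,I_a)$ and then re-evaluating \eqref{eq:9} via Jensen's formula---which is close in spirit to the paper's first, Brown-measure-based proof.
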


By formula \eqref{eq:8} and the canonical inclusion $L^{\infty} (\Omega) \subset \Rh$ we have:
\[
\log \ddet_{\Rh} (b) = \int_{\Omega} \log |b (\omega)| \, dP (\omega) \; .
\]
Using the multiplicativity of the determinant, formula \eqref{eq:26} applied to $b^{-1} a$ therefore implies formula \eqref{eq:27n}. We give two proofs of \eqref{eq:26}:

{\bf 1.} \begin{proof}
Consider the Brown measure $\mu_{aU}$ of the operator $aU$ in $\Rh$. According to \eqref{eq:9} and \eqref{eq:10} it is a Borel probability measure supported on $\sigma (aU) \subset \C$ and uniquely determined by the formula
\begin{equation} \label{eq:27}
\log \ddet_{\Rh} (z - aU) = \int_{\C} \log |z - \lambda| \, d\mu_{aU} (\lambda) \quad \mbox{for $z$ in $\C$} \; .
\end{equation}
For $\zeta \in S^1$ consider the automorphism $\alpha_{\zeta}$ of $\Rh$. We have
\begin{eqnarray*}
\ddet_{\Rh} (z-aU) & = & \ddet_{\Rh} (\alpha_{\zeta} (z-aU)) = \ddet_{\Rh} (z-a\zeta U) \\
& = & \ddet_{\Rh} (\zeta^{-1} z - aU) \; .
\end{eqnarray*}
Using formula \eqref{eq:27} we see that $\zeta_* \mu_{aU} = \mu_{aU}$ for all $\zeta \in S^1$ i.e. that $\mu_{aU}$ is rotation invariant, c.f. \cite{DSch} proof of theorem 5.4.

For the spectral radius of $aU$ we find
\begin{eqnarray*}
\log r_{\sigma} (aU) & = & \lim_{n\to \infty} \frac{1}{n} \log \| (aU)^n \| \quad \mbox{(operator norm in $\Rh$)} \\
& = & \lim_{n\to \infty} \frac{1}{n} \log \| (aU)^n U^{-n} \|_{\infty} \quad \mbox{(operator norm in $L^{\infty} (\Omega)$)} \\
& = & \lim_{n\to \infty} \frac{1}{n} \log \max_{\omega \in \Omega} |a (\omega) \cdots a (\gamma^{n-1} (\omega))| \\
& = & \lim_{n\to \infty} \max_{\omega \in \Omega} \frac{1}{n} \nsum^{n-1}_{i=0} \log |a (\gamma^i (\omega))| \\
& = & \int_{\Omega} \log |a (\omega)| \, dP (\omega) \; .
\end{eqnarray*}
The last step is valid since by our assumptions the convergence in the Birkhoff ergodic theorem for the continuous function $\log |a (\omega)|$ is {\it uniform}, c.f. \cite{KH} Proposition 4.1.13. A similar calculation shows that
\[
\log r_{\sigma} ((aU)^{-1}) = \int_{\Omega} \log |a(\omega)|^{-1} dP (\omega) = -\log r_{\sigma} (aU) \; .
\]
Hence we have $r_{\sigma} ((aU)^{-1}) = r_{\sigma} (aU)^{-1}$ and therefore the spectrum $\sigma (aU)$ is contained in $|\lambda| = r_{\sigma} (aU) = M (a) > 0$ where
\[
M (a) = \exp \int_{\Omega} \log |a (\omega)| \, dP (\omega) \; .
\]
Applying $\alpha_{\zeta}$ one sees that $\sigma (aU)$ is rotation invariant and hence we have $\sigma (aU) = \{ \lambda \in \C \tei |\lambda| = M (a) \}$. Compare \cite{AP} \S\,4 for very similar arguments.

The Brown probability measure $\mu_{aU}$ is supported in $\sigma (aU)$ and rotation invariant. Hence the support of $\mu_{aU}$ is the circle $\{ |\lambda| = M (a) \}$ and $\mu_{aU}$ is the push-foreward to $\C$ of the unique rotation invariant probability measure on $\{ |\lambda| = M (a) \}$. Hence we have $\mu_{aU} = M (a)_* \mu$ where $\mu$ is the Haar measure on $S^1$ and $M (a) : S^1 \to \C$ maps $\lambda$ to $\lambda M(a)$. Hence we have by \eqref{eq:27}
\begin{eqnarray*}
\log \ddet_{\Rh} (z-aU) & = & \int_{|\lambda| = M (a)} \log |z-\lambda| \, d (M (a)_* \mu) (\lambda) \\
& = & \int_{S^1} \log |z - \lambda M (a)| \, d \mu (\lambda) \\
& = & \log M (a) + \log^+ |zM (a)^{-1}| \quad \mbox{by Jensen's formula} \\
& = & \max (\log |z| , \log M (a)) \; .
\end{eqnarray*}
For $z = 1$ we get formula \eqref{eq:26}. 
\end{proof}

{\bf 2.} \begin{proof}
Since $\tau ((aU)^{\nu}) = 0$ for $\nu \neq 0$ and $r_{\sigma} (aU) = M (a)$ as we have just seen, corollary \ref{t2} gives the formula
\[
\log \ddet_{\Rh} (z - aU) = \max (\log |z| , \log M (a)) \quad \mbox{if} \; |z| \neq M (a) \; .
\]
Both sides of the equation are subharmonic functions of $z$ in $\C$, c.f. \cite{B}. Since they agree on $\C \setminus \{ |z| = M (a) \}$ and since $\{ |z| = M (a) \}$ is a set of Lebesgue measure zero in $\C$ they agree for all $z \in \C$ and in particular for $z = 1$. This follows from \cite{R} formula (7) on p. 344.
\end{proof}

\begin{rem}
In the situation of the proposition the element $z - aU$ is a unit in $\Rh$ if and only if $M (a) \neq |z|$. This follows from corollary \ref{t2} and the preceeding calculations.
\end{rem}

Using a result of \cite{DSch} which in turn is based on the theory developed in \cite{HSch1} we now give a more general version of formula \eqref{eq:2} and some finer information.

\begin{theorem} \label{t6}
Let $(\Omega , \Ah , P)$ be a probability space with an ergodic measure preserving automorphism $\gamma$. Let $a : \Omega \to \C$ be a bounded measurable function which is either non-negative or non-zero $P$-almost everywhere and set
\[
M (a) = \exp \int_{\Omega} \log |a (\omega)| \, dP (\omega) \ge 0 \; .
\]
Then the following assertions hold:\\[0.2cm]
{\bf 1} $\log \det_{\Rh} (1-aU) = \left( \int_{\Omega} \log |a (\omega)| \, dP (\omega) \right)^+$\\[0.2cm]
{\bf 2} We have $\supp \mu_{a U} = \{ |\lambda| = M(a) \}$. Moreover, if $\log |a|$ is integrable i.e. if $M (a) > 0$ the Brown measure of $aU$ is the push-foreward $\mu_{aU} = M (a)_* \mu$ of the Haar measure $\mu$ on $S^1$ via the map $M (a) : S^1 \to \C$ sending $\lambda$ to $\lambda M (a)$. If $\log |a|$ is not integrable, so that $M (a) = 0$ then $\mu_{aU}$ is the Dirac measure supported in $0 \in \C$.\\
{\bf 3} The spectrum $\sigma (aU)$ contains $\supp \mu_{aU} = \{ |\lambda| = M (a) \}$. If $a \in L^{\infty} (\Omega)^{\times}$ then we have $\sigma (aU) = \supp \mu_{aU}$ if and only if the convergence in the Birkoff ergodic theorem for the integrable function $\log |a|$ is uniform in the $\| \; \|_{\infty}$-norm.
\end{theorem}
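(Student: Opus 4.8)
The plan is to establish assertion \textbf{2} first, by reducing to the case $a\ge 0$ and then quoting \cite{DSch}, and to deduce assertions \textbf{1} and \textbf{3} from it by Jensen's formula and the spectral-radius estimates already used in Proposition \ref{t5}.

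\textbf{Reduction to $a\ge 0$ and assertion 2.} If $a$ is non-zero $P$-almost everywhere I would set $c=\overline{a}/|a|\in L^{\infty}(\Omega,S^1)$; by Proposition \ref{t3} the automorphism $\alpha_c$ satisfies $\alpha_c(aU)=ac\,U=|a|\,U$ and preserves both $\ddet_{\Rh}$ and the Brown measure, while $M(a)=M(|a|)$ since $\log|a|=\log\big||a|\big|$. So in all cases I may assume $a\ge 0$. For such $a$ the operator $aU$ is of the type handled in \cite{DSch}: using $UxU^{-1}=x\circ\gamma$ one has $|aU|=(U^{*}a^{2}U)^{1/2}=a\circ\gamma^{-1}$ and hence the polar decomposition $aU=U\,(a\circ\gamma^{-1})$ with $U$ a Haar unitary ($\tau(U^{n})=0$, and more generally $\tau((aU)^{n})=\tau(a_nU^{n})=0$ for $n\neq 0$, where $a_n=\prod_{i=0}^{n-1}a\circ\gamma^{i}$ is the cocycle of \eqref{eq:19}). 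The decisive input, which I would take verbatim from \cite{DSch}, is that the ergodicity of $\gamma$ forces the Brown measure of such an $aU$ onto a \emph{single} circle: it gives $\mu_{aU}=M(a)_{*}\mu$ when $M(a)>0$ and $\mu_{aU}=\delta_0$ when $M(a)=0$, so that in either case $\supp\mu_{aU}=\{|\lambda|=M(a)\}$. The degenerate case $M(a)=0$ I would obtain from the non-degenerate one by letting $a+\varepsilon\downarrow a$ and using \eqref{eq:6}, or directly from \cite{DSch}.

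\textbf{Assertion 1 from assertion 2.} With the Brown measure known I would evaluate \eqref{eq:9} at $z=1$. When $M(a)>0$, pushing forward by $M(a):S^1\to\C$ and applying Jensen's formula gives
\[
\log\ddet_{\Rh}(1-aU)=\int_{S^1}\log|1-\zeta M(a)|\,d\mu(\zeta)=\log M(a)+\log^{+}\!M(a)^{-1}=\max\Big(\int_{\Omega}\log|a|\,dP,\,0\Big),
\]
which is exactly assertion \textbf{1}; when $M(a)=0$ one has $\mu_{aU}=\delta_0$, so $\log\ddet_{\Rh}(1-aU)=\log|1|=0$, again the correct value since then $\int_{\Omega}\log|a|\,dP=-\infty$.

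\textbf{Assertion 3.} The inclusion $\supp\mu_{aU}\subseteq\sigma(aU)$ is the general property of the Brown measure recalled after \eqref{eq:9}, so $\sigma(aU)$ always contains $\{|\lambda|=M(a)\}$. For the refinement assume $a\in L^{\infty}(\Omega)^{\times}$. Applying $\alpha_{\zeta}$, which sends $aU$ to $\zeta\,aU$, shows $\sigma(aU)$ is rotation invariant, and invertibility of $aU$ confines it to $\{\,r_{\sigma}((aU)^{-1})^{-1}\le|\lambda|\le r_{\sigma}(aU)\,\}$. As in Proposition \ref{t5}, $\tfrac1n\log\|(aU)^{n}\|=\tfrac1n\log\|a_n\|_{\infty}$ is the essential supremum over $\omega$ of the Birkhoff averages of $\log|a|$, so $r_{\sigma}(aU)=M(a)$ exactly when these averages converge to $\int\log|a|\,dP$ uniformly from above; the same computation for $(aU)^{-1}$ gives $r_{\sigma}((aU)^{-1})^{-1}=M(a)$ exactly when the convergence is uniform from below. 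Hence the rotation-invariant set $\sigma(aU)$ collapses to the single circle $\{|\lambda|=M(a)\}=\supp\mu_{aU}$ if and only if the convergence in the Birkhoff ergodic theorem for $\log|a|$ is uniform in the $\|\cdot\|_{\infty}$-norm.

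The main obstacle is entirely contained in assertion \textbf{2}: the fact that $\mu_{aU}$ sits on a single circle rather than on a thick annulus. This is \emph{false} for a generic $R$-diagonal element $UH$ with $U$ free from $H$, so it must genuinely exploit the crossed-product structure and the ergodicity of $\gamma$; extracting it is precisely the role of \cite{DSch} (resting on the invariant-subspace theory of \cite{HSch1} and the convergence of the iterated Aluthge transform). Everything else is a formal consequence via Jensen's formula and the spectral-radius estimates of Proposition \ref{t5}.
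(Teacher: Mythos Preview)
Your proposal is correct and follows essentially the same route as the paper: reduce to $a\ge 0$ via the automorphism of Proposition~\ref{t3}, invoke \cite{DSch} (together with the identity $|a|U=U\,|a\circ\gamma^{-1}|$) to identify $\mu_{aU}$ with $\mu_{M(a)U}$ and hence with $M(a)_*\mu$ or $\delta_0$, then read off assertion~\textbf{1} from Jensen's formula and assertion~\textbf{3} from the spectral-radius computation of the Birkhoff averages. The only cosmetic differences are that the paper spells out the rotation-invariance step showing $\mu_{M(a)U}=M(a)_*\mu$, and handles $r_\sigma((aU)^{-1})$ via the adjoint identity $r_\sigma(\bar a^{-1}U)=r_\sigma((aU)^{-1})$ rather than computing $\|(aU)^{-n}\|$ directly; your approximation suggestion $a+\varepsilon\downarrow a$ for the case $M(a)=0$ is unnecessary (and delicate, since Brown measure is not continuous in general), but you already note the direct route through \cite{DSch}.
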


\begin{proof}
If $a = a (\omega)$ is non-zero $P$-almost everywhere we obtain a function $c$ in $L^{\infty} (\Omega , S^1)$ by setting $c (\omega ) = \frac{a (\omega)}{|a (\omega)|}$. Consider the automorphism $\alpha_{\oc}$ of $\Rh$ defined in proposition \ref{t3}. We have $\alpha_{\oc} (aU) = a \oc U = |a| U$ and hence by proposition \ref{t3}
\[
\ddet_{\Rh} (1 - aU) = \ddet_{\Rh} (1 - |a| U) 
\]
and $\mu_{aU} = \mu_{|a| U}$. It also follows that $\sigma (aU) = \sigma (|a| U)$ since $\lambda - aU$ is invertible if and only if $\alpha_{\oc} (\lambda - aU) = \lambda - |a| U$ is. In \cite{DSch} Theorem 5.4, it is shown that the Brown measures of $U|a|$ and of $M (a) U$ agree for all $a$ in $L^{\infty} (\Omega)$. Since $|a|U = U |a \verk \gamma^{-1}|$ it follows that $\mu_{|a|U}$ equals the Brown measure of $M (a \verk \gamma^{-1})U = M (a) U$. Thus we have $\mu_{aU} = \mu_{M(a) U}$. If $a$ is non-negative we get the same assertions without having to assume that $a$ is non-zero, because we can quote \cite{DSch} theorem 5.4 directly. The Brown measure of $M (a) U$ is rotation invariant as one sees by applying the automorphisms $\alpha_{\zeta}$ for $\zeta \in S^1$. It is supported in the spectrum of $M (a) U$ which by rotation invariance is the set $\{ |\lambda| = M (a) \}$. It follows that we have
\[
\supp \mu_{aU} = \supp \mu_{M (a) U} = \{ |\lambda| = M (a) \} \; .
\]
Thus for $M (a) = 0$ we get $\mu_{aU} = \delta_0$. If $M (a) > 0$ there is only one rotation invariant probability measure on the Borel algebra of $\C$ with support in $\{ |\lambda| = M (a) \}$ namely $M (a)_* \mu$. Thus {\bf 2} is proved. If $M (a) = 0$ we have $\mu_{aU} = \delta_0$ and hence
\begin{eqnarray*}
\log \ddet_{\Rh} (1 - aU) & = & \int_{\C} \log |1-\lambda| (d \delta_0) (\lambda) = \log 1 = 0 \\
& = & \left( \int_{\Omega} \log |a (\omega)| \, dP (\omega) \right)^+  \; ,
\end{eqnarray*}
since the last integral has the value $-\infty$. If $M (a) > 0$ we have $\mu_{aU} = M (a)_* \mu$ and hence
\begin{eqnarray*}
\log \ddet_{\Rh} (1 -aU) & = & \int_{\C} \log |1-\lambda| \, d (M (a)_* \mu) (\lambda) \\
& = & \int_{S^1} \log |1-\lambda M (a)| \, d\mu (\lambda) \\
& = & \log^+ M (a) \quad \mbox{by Jensen's formula} \; .
\end{eqnarray*}
Thus assertion {\bf 1} is proved as well. The inclusion $\sigma (aU) \supset \supp \mu_{aU}$ is a general fact about Brown measures. Now assume that $a \in L^{\infty} (\Omega)^{\times}$. Then $\log |a|$ is essentially bounded and we have
\[
\| (aU)^n \|^{1/n} = \| (aU)^n U^{-n} \|^{1/n}_{\infty} = \| \prod^{n-1}_{i=0} a \verk \gamma^i \|^{1/n}_{\infty} \; .
\]
Hence we get
\begin{eqnarray*}
  r_{\sigma} (aU) & = & \lim_{n\to \infty} \| (aU)^n \|^{1/n} \\
 & = & M (a) \exp (\lim_{n\to \infty} \underset{\omega \in \Omega}{\ess\,\sup} ( \frac{1}{n} \sum^{n-1}_{i=0} \log |a \verk \gamma^i | - \int_{\Omega} \log |a| dP ) ) \; .
\end{eqnarray*}
Thus the equation $r_{\sigma} (aU) = M (a)$ is equivalent to the formula
\[
\lim_{n\to \infty} \underset{\omega \in \Omega}{\ess\,\sup} (\frac{1}{n} \sum^{n-1}_{i=0} \log |a \verk \gamma^i| - \int_{\Omega} \log |a| \, dP ) = 0 \; .
\]
Replacing $a$ by $\oa^{-1}$ we see that the equation $r_{\sigma} (\oa^{-1} U) = M (\oa^{-1})$ is equivalent to:
\[
\lim_{n\to\infty} \underset{\omega \in \Omega}{\ess \inf} (\frac{1}{n} \sum^{n-1}_{i=0} \log |a \verk \gamma^i| - \int_{\Omega} \log |a| \, dP) = 0 \; .
\]
By {\bf 2} we have $\sigma (aU) = \supp \mu_{aU}$ if and only if $\sigma (aU) \subset \{ |\lambda| = M (a) \}$. This in turn is equivalent to the equations $r_{\sigma} (aU) = M (a) = r_{\sigma} ((aU)^{-1})^{-1}$. Now assertion {\bf 3} follows from the equations $M (\oa^{-1})^{-1} = M (a)$ and
\[
r_{\sigma} (\oa^{-1} U) = r_{\sigma} ((\oa^{-1}U)^*) = r_{\sigma} (U^{-1} a^{-1}) = r_{\sigma} ((aU)^{-1}) \; .
\]
\end{proof}

In proposition \ref{t3} we noted that the determinant is invariant with respect to the automorphism $\alpha_c$. In fact, at least if $\Rh$ is a $\mathrm{II}_1$-factor this is true for endomorphisms respecting much less structure. We state the following proposition as a curiosity. It is not needed in the sequel:

\begin{prop} \label{t7}
Let $\Mh$ be a $\mathrm{II}_1$-factor with normalized trace $\tau$ which acts on a separable Hilbert space. Consider a group homomorphism
\[
\alpha : \Mh^{\times} \to \Mh^{\times} \quad \mbox{with} \quad \alpha (\lambda) = \lambda \quad \mbox{for all} \; \lambda \in \R^{\times} \subset \Mh^{\times} \; .
\]
Then we have $\det_{\Mh} (\alpha (\Phi)) = \det_{\Mh} \Phi$ for all $\Phi$ in $\Mh^{\times}$. 
\end{prop}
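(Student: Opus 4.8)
The plan is to exploit that, by multiplicativity \eqref{eq:7}, the map $\log \det_{\Mh} : \Mh^{\times} \to \R$ is a homomorphism into the additive group $\R$; consequently it annihilates every multiplicative commutator $[A,B] := ABA^{-1}B^{-1}$, since $\det_{\Mh}(ABA^{-1}B^{-1}) = \det_{\Mh}(A)\det_{\Mh}(B)\det_{\Mh}(A)^{-1}\det_{\Mh}(B)^{-1} = 1$. The whole statement then reduces to one structural fact about the group $\Mh^{\times}$: that every $\Phi \in \Mh^{\times}$ can be written as $\Phi = d \cdot K$, where $d = \det_{\Mh}(\Phi) \in \R_{>0} \subset \R^{\times}$ and $K = \prod_k [A_k, B_k]$ is a finite product of commutators. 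Granting this, the proof finishes in one line: since $\alpha$ is a group homomorphism it sends commutators to commutators and fixes $d$, so
\[
\det_{\Mh}(\alpha(\Phi)) = \det_{\Mh}\Big( \alpha(d) \prod_k [\alpha(A_k), \alpha(B_k)] \Big) = \det_{\Mh}(d) = \det_{\Mh}(\Phi) \; ,
\]
using multiplicativity and the vanishing of $\det_{\Mh}$ on commutators once more.

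To obtain the decomposition I would first normalise. As $\Phi$ is invertible, $d = \det_{\Mh}(\Phi) > 0$, and because $\tau(1) = 1$ we have $\det_{\Mh}(d^{-1}) = d^{-1}$, so $\Psi := d^{-1}\Phi$ satisfies $\det_{\Mh}(\Psi) = 1$. Thus everything rests on the assertion that $\det_{\Mh}(\Psi) = 1$ forces $\Psi$ to lie in $[\Mh^{\times}, \Mh^{\times}]$ as a finite product, i.e. that the kernel of $\det_{\Mh}$ is exactly the commutator subgroup of $\Mh^{\times}$; the reverse inclusion is the trivial observation above. For this I would invoke the theory of finite products of commutators in finite factors (Fack--de la Harpe, de la Harpe--Skandalis), which is where the hypothesis that $\Mh$ be a $\mathrm{II}_1$-factor acting on a separable Hilbert space is really used. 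Two ingredients make it work: (i) the self-absorption $\Mh \cong M_2(\Mh)$ together with the Whitehead-type identity $\mathrm{diag}(A, A^{-1}) = [\,\mathrm{diag}(A,1),\, W\,]$ for the flip $W$, the basic mechanism by which the cited theorem manufactures commutators out of trace-balanced data governing the positive part $e^{S}$ (with $\tau(S)=0$) of the polar decomposition; and (ii) the fact that a central unit scalar is already a single commutator, since if $A,B$ are unitaries generating a rotation algebra with $AB = e^{i\theta}BA$ inside the copy of the hyperfinite factor sitting in $\Mh$, then $[A,B] = e^{i\theta}\cdot 1$. Point (ii) handles the unitary factor of $\Psi$ and, incidentally, shows that the de la Harpe--Skandalis phase obstruction valued in $\C/\Gamma$ degenerates to $\Gamma = i\R$ for a $\mathrm{II}_1$-factor, so that $\det_{\Mh}$ by itself already cuts out the commutator subgroup.

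The main obstacle is precisely this structural input $\Ker \det_{\Mh} = [\Mh^{\times}, \Mh^{\times}]$, and two points deserve care. First, the decomposition must use genuine finite products of commutators rather than the norm-closure of the commutator subgroup: since $\alpha$ is only an abstract group homomorphism, with no continuity assumed, I cannot pass to limits, so a mere density statement would be useless. Second, the self-adjoint part $e^{S}$ with $\tau(S) = 0$ must be treated multiplicatively; the additive Fack--de la Harpe theorem, which writes $S$ as a sum of additive commutators $XY-YX$, does not exponentiate to a product of multiplicative commutators, so the genuinely multiplicative result is the one needed. Once $\Ker \det_{\Mh} = [\Mh^{\times}, \Mh^{\times}]$ is secured, the computation of the first paragraph applies verbatim and the proposition follows.
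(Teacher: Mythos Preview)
Your argument is correct and follows exactly the same route as the paper: normalise by $d = \det_{\Mh}\Phi$, invoke the Fack--de la Harpe result \cite{FH} that $\Ker \det_{\Mh} = [\Mh^{\times},\Mh^{\times}]$ as a finite product of commutators, and conclude by applying $\alpha$ and the multiplicativity of the determinant. The paper's proof is terser---it simply cites \cite{FH} without your supplementary discussion of how that structural theorem is obtained---but the logical skeleton is identical.
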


\begin{proof}
According to \cite{FH} every element $\Psi$ in $\Mh^{\times}$ with $\det_{\Mh} \Psi = 1$ is a finite product of commutators:
\[
\Psi = (\Phi_1 , \Psi_1) \cdots (\Phi_r , \Psi_r) \quad \mbox{with} \; \Phi_i , \Psi_i \in \Mh^{\times} \; .
\]
For $\Phi \in \Mh^{\times}$ set $\lambda = \ddet_{\Mh} \Phi > 0$. Then $\Psi = \lambda^{-1} \Phi \in \Mh^{\times}$ has determinant $1$ and hence we have
\[
\Phi = \lambda (\Phi_1 , \Psi_1) \cdots (\Phi_r , \Psi_r) \; .
\]
By the assumption on $\alpha$ we get
\[
\alpha (\Phi) = \lambda (\alpha (\Phi_1) , \alpha (\Psi_1)) \cdots (\alpha (\Phi_r) , \alpha (\Psi_r)) \; .
\]
Since $\det_{\Mh}$ is trivial on commutators this implies that
\[
\ddet_{\Mh} \alpha (\Phi) = \ddet_{\Mh} \lambda = \lambda = \ddet_{\Mh} \Phi \; .
\]
\end{proof}

\begin{rem}
Any $\C$-algebra homomorphism $\alpha : \Mh\to \Mh$ mapping $1$ to $1$ gives a homomorphism of groups $\alpha : \Mh^{\times} \to \Mh^{\times}$ as above and hence does not affect the determinant. This is not at all clear from the definition of $\det_{\Mh}$. 
\end{rem}

 \section{The multiplicative ergodic theorem and determinants} \label{s4}

 For a matrix $C \in M_N (\C)$ let $\|C\|_{\sigma}$ be the operator norm of $C$ viewed as a map from the Hilbert space $\C^N$ to itself. Thus $\|C \|_{\sigma} = \max \sqrt{\lambda}$ where $\lambda \ge 0$ runs over the eigenvalues of the positive endomorphism $CC^*$. 

 As in section \ref{s3} consider an ergodic measure preserving automorphism $\gamma$ of a probability space $(\Omega , \Ah , P)$. The operator norm on the von~Neumann algebra
 \[
 M_N (L^{\infty} (\Omega)) \subset \Bh (L^2 (\Omega)^N)
 \]
 is given by
 \[ 
 \| A \| = \underset{\omega \in \Omega}{\ess \; \sup} \; \| A (\omega) \|_{\sigma} \; .
 \]
 The semidirect product $M_N (L^{\infty} (\Omega)) \rtimes_{\gamma} \Z$ consists of all formal series $\Phi = \sum A_i U^i$ with $A_i$ in $M_N (L^{\infty} (\Omega))$ such that left multiplication by $\Phi$ defines a bounded operator from $\Kh^N_0$ to $\Kh^N$ and hence an element of $\Bh (\Kh^N)$. We identify $M_N (\Rh)$ with $M_N (L^{\infty} (\Omega)) \rtimes_{\gamma} \Z$. The trace $\tau$ on $\Rh$ is extended to a trace $\tau_N : M_N (\Rh) \to \C$ by setting $\tau_N = \tau \verk \tr_{\Rh}$ where $\tr_{\Rh} : M_N (\Rh) \to \Rh$ is the usual trace of a matrix over a ring. Note that $\tau_N (1) = N$.

 In this section we calculate Fuglede--Kadison determinants of the form $\det_{M_N (\Rh)} (1 -AU)$ for certain $A$ in $M_N (L^{\infty} (\Omega))$. The answer will be given in terms of the Ljapunov exponents of $A$ which we now recall. Consider any measurable map $A : \Omega \to \GL_N (\C)$. It can be used to lift the ergodic automorphism $\gamma$ of $\Omega$ to a measurable automorphism $\Gamma$ of the trivial bundle $\Omega \times \C^N$ by setting $\Gamma (\omega , v) = (\gamma (\omega) , v A (\omega))$. The iterates of $\Gamma$ are given by the formula $\Gamma^n (\omega , v) = (\gamma^n (\omega) , vA_n (\omega))$ for $n \in \Z$. Here $(A_n (\omega))_{n \in \Z}$ is the cocycle attached to $A$: 
 \begin{eqnarray*}
 A_n (\omega) & = & A (\omega) \cdots A (\gamma^{n-1} (\omega)) \quad \mbox{for} \; n > 0 \\
 A_n (\omega) & = & A (\gamma^{-1} (\omega))^{-1} \cdots A (\gamma^n (\omega))^{-1} \quad \mbox{for} \; n < 0
 \end{eqnarray*}
 and $A_0 (\omega) = 1$. Setting $\log^+ x = \max (\log x ,0)$ we have:

 \begin{theorem}[Oseledets] \label{t8}
 Assume that $\log^+ \| A^{\pm 1} (\omega) \|_{\sigma}$ is integrable over $\Omega$. Then there are:\\
 {\bf a} a measurable $\gamma$-invariant subset $\Omega'$ of $\Omega$ with $P (\Omega \ohne \Omega') = 0$\\
 {\bf b} an integer $1 \le M \le N$ and real numbers $\chi_1 < \chi_2 < \ldots < \chi_M$, the Ljapunov exponents of $A$\\
 {\bf c} positive integers $r_1 , \ldots , r_M$ with $r_1 + \ldots + r_M = N$, the multiplicities of the $\chi_j$\\
 {\bf d} measurable maps $V_j : \Omega' \to \Gr_{r_j} (\C^N)$ into the Grassmannian space of $r_j$-dimensional subspaces of $\C^N$,\\
 such that the following assertions hold for all $\omega \in \Omega'$\\
 {\bf i} $\C^N = \bigoplus^M_{j=1} V_j (\omega)$\\
 {\bf ii} $V_j (\omega) A (\omega) = V_j (\gamma (\omega))$ and hence $V_j (\omega) A_n (\omega) = V_j (\gamma^n (\omega))$ for all $n \in \Z , 1 \le j \le M$.\\
 {\bf iii} For $v \in V_j (\omega) , v \neq 0$ we have
 \[
 \lim_{n\to \pm \infty} \frac{1}{n} \log \frac{\| v A_n (\omega)\|}{\| v \|} = \chi_j \quad \mbox{uniformly in} \; v \; .
 \]
 {\bf iv} $\sum^M_{j=1} r_j \chi_j = \int_{\Omega} \log | \det A (\omega)| \, dP (\omega)$.
 \end{theorem}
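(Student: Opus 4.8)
The statement is the Oseledets multiplicative ergodic theorem, and the plan is to derive it from Kingman's subadditive ergodic theorem applied to exterior powers, supplemented by the ordinary Birkhoff theorem for assertion {\bf iv}. For each $1 \le k \le N$ the exterior power $\bigwedge^k A : \Omega \to \GL (\bigwedge^k \C^N)$ is again a measurable cocycle, and since $\| \bigwedge^k C \|_{\sigma} \le \| C \|_{\sigma}^k$ the function $\log^+ \| (\bigwedge^k A)^{\pm 1} \|_{\sigma}$ is integrable. The quantity $\| \bigwedge^k (A_n (\omega)) \|_{\sigma}$ equals the product of the $k$ largest singular values of $A_n (\omega)$, and the cocycle relation $A_{n+m} (\omega) = A_n (\omega) A_m (\gamma^n (\omega))$ makes $\log \| \bigwedge^k (A_n (\omega)) \|_{\sigma}$ subadditive in $n$. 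Kingman's theorem therefore produces an almost everywhere limit $\lim_n \frac1n \log \| \bigwedge^k (A_n (\omega)) \|_{\sigma} = s_k$, which is constant by ergodicity of $\gamma$. Writing $\lambda_i = s_i - s_{i-1}$ (with $s_0 = 0$) gives $\lambda_1 \ge \dots \ge \lambda_N$, the exponents listed with multiplicity; their distinct values in increasing order are the $\chi_1 < \dots < \chi_M$ of {\bf b}, and $r_j$ is the number of $i$ with $\lambda_i = \chi_j$, which is {\bf c}. Taking $k = N$ one has $\| \bigwedge^N (A_n (\omega)) \|_{\sigma} = | \det A_n (\omega) |$, and the cocycle relation turns $\log | \det A_n (\omega) |$ into the Birkhoff sum $\sum_{i=0}^{n-1} \log | \det A (\gamma^i (\omega)) |$; the Birkhoff theorem then yields {\bf iv}.

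The construction of the splitting $V_j$ and the uniform growth rates are the analytic heart, and here I would follow the classical argument of Oseledets and Raghunathan. The decisive step is to show that for almost every $\omega$ the positive definite matrices $(A_n (\omega)^* A_n (\omega))^{1/2n}$ converge as $n \to \infty$ to a positive definite matrix $\Lambda^+ (\omega)$ whose eigenvalues are $e^{\chi_1} < \dots < e^{\chi_M}$ with multiplicities $r_1 , \dots , r_M$. Its eigenspace decomposition records the forward behaviour of the cocycle and yields a measurable flag $\C^N = F_1^+ (\omega) \supsetneq \dots \supsetneq F_M^+ (\omega) \supsetneq 0$. Running the identical argument for the inverse cocycle, which is legitimate because $A$ takes values in $\GL_N (\C)$ and the integrability hypothesis is symmetric in $A^{\pm 1}$, produces a backward flag $F_\bullet^- (\omega)$. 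The Oseledets spaces are then the intersections of these two transverse flags; transversality, valid almost everywhere, gives both the direct sum decomposition {\bf i} and the two-sided, $v$-uniform limit in {\bf iii}.

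The step I expect to be the main obstacle is exactly this almost sure convergence of $(A_n^* A_n)^{1/2n}$ together with the measurability {\bf d} of the limiting spaces. The standard mechanism is to estimate the angle between the singular subspaces of $A_n$ and of $A_{n+1}$: whenever two consecutive exponents satisfy $\chi_j < \chi_{j+1}$, the corresponding gap in the singular spectrum of $A_n$ grows exponentially in $n$, which forces the associated spectral projections to be Cauchy for almost every $\omega$ and hence to converge. The limiting projections, being almost everywhere limits of measurable ones, are measurable, so the maps $V_j : \Omega' \to \Gr_{r_j} (\C^N)$ are measurable. The invariance {\bf ii} is then formal: from $A_{n+1} (\omega) = A (\omega) A_n (\gamma (\omega))$ one has $\| (v A (\omega)) A_n (\gamma (\omega)) \| = \| v A_{n+1} (\omega) \|$, so $v \in V_j (\omega)$ forces $v A (\omega) \in V_j (\gamma (\omega))$, whence $V_j (\omega) A (\omega) = V_j (\gamma (\omega))$ by equality of dimensions and the iterated version by the cocycle relation. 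Finally the $\gamma$-invariant full measure set $\Omega'$ of {\bf a} is obtained by intersecting the set on which all the above limits exist with its $\gamma^n$-translates over $n \in \Z$.
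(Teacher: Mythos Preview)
The paper does not prove this theorem at all: immediately after the statement it simply writes ``For the proof see \cite{O}, \cite{M} V (2.1) Theorem and remark (5) and App.~A.'' So there is nothing to compare against beyond a citation. Your outline is the standard Raghunathan--Ruelle route (Kingman's subadditive theorem on exterior powers to get the exponents, convergence of $(A_n^* A_n)^{1/2n}$ to build the forward filtration, the same for the inverse cocycle to get the backward filtration, intersection to get the splitting), and it is a correct high-level sketch. One small caution: the eigenspaces of $\Lambda^+(\omega)$ are not themselves the Oseledets spaces but only give the forward filtration $F_j^+(\omega) = \bigoplus_{i \le j} (\text{eigenspace for } e^{\chi_i})$; you say this, but be careful not to conflate the eigenspace decomposition of $\Lambda^+$ with the final splitting, which genuinely requires the two-sided argument. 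Since the paper treats the theorem as a quoted black box, a citation in the same spirit would have sufficed here.
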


 For the proof see \cite{O}, \cite{M} V (2.1) Theorem and remark (5) and App.~A. If $A$ is constant, the Ljapunov exponents are the logarithms of the absolute values of the eigenvalues of $A$. For $N = 1$ and $A$ given by a scalar function $a : \Omega \to \C^*$ the only Ljapunov exponent is: 
 \[
 \chi = \int_{\Omega} \log |a(\omega)| \, dP (\omega) \; .
 \]
 This follows from {\bf iv} or from Birkhoff's ergodic theorem applied to $\log |a (\omega)|$. One relation between determinants in semidirect products and Ljapunov exponents is given by the following theorem.

 \begin{theorem} \label{t9}
 Let $\gamma$ be a measure preserving ergodic automorphism of a probability space $(\Omega , \Ah , P)$. Assume that $(\Omega , \Ah)$ is a standard Borel space. Let $A : \Omega \to \GL_N (\C)$ be a measurable map for which $\|A (\omega) \|_{\sigma}$ is essentially bounded and $\log^+ \|A^{-1} (\omega) \|_{\sigma}$ integrable over $\Omega$. Viewing $A$ as an element of $M_N (L^{\infty} (\Omega)) \subset M_N (\Rh)$ we have the following formula:
 \begin{equation} \label{eq:28}
 \log \ddet_{M_N (\Rh)} (1 - AU) = \sum^M_{j=1} r_j \chi^+_j \; .
 \end{equation}
 Set $M_j = \exp \chi_j$ and let $\mu$ be Haar measure on $S^1$. The Brown measure of $AU$ is given by
 \[
 \mu_{AU} = \nsum^M_{i=1} r_j (M_{j^*} \mu) \; .
 \]
 Its support is the union of the circles $\{ |\lambda| = M_j \}$ for $1 \le j \le M$. If $M = 1$, so that there is only one Ljapunov exponent, we have
 \begin{equation} \label{eq:29}
 \log \ddet_{M_N (\Rh)} (1 - AU) = \left( \int_{\Omega} \log |\det A (\omega)| \, dP(\omega) \right)^+ \; .
 \end{equation}
 On the other hand, if all Ljapunov exponents are non-negative we have:
 \begin{equation} \label{eq:30}
 \log \ddet_{M_N (\Rh)} (1 - AU) = \int_{\Omega} \log |\det A (\omega)| \, dP (\omega) \; .
 \end{equation}
 \end{theorem}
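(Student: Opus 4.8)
The plan is to reduce the assertion to the case of a single Ljapunov exponent, where it becomes the Brown measure computation underlying theorem \ref{t6}. First I would apply the Oseledets theorem \ref{t8} to the cocycle of $A$, obtaining on a conull $\gamma$-invariant set the equivariant splitting $\C^N = \bigoplus_{j=1}^M V_j(\omega)$. Put $W_k(\omega) = V_1(\omega) \oplus \cdots \oplus V_k(\omega)$; by assertion {\bf ii} of theorem \ref{t8} this is an increasing, $\gamma$-equivariant measurable flag with $W_k(\omega) A(\omega) = W_k(\gamma(\omega))$. Since $(\Omega, \Ah)$ is a standard Borel space, the fibrewise orthogonal projection $\pi_k(\omega)$ onto $W_k(\omega)$ is measurable in $\omega$ and thus defines a projection $P_k \in M_N(L^{\infty}(\Omega)) \subset M_N(\Rh)$. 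Equivariance of the flag says precisely that $AU$, and hence $1 - AU$, maps the range of each $P_k$ into itself, so $1 - AU$ is block upper triangular along the filtration $0 \subset P_1 \subset \cdots \subset P_M = 1$.

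Next I would exploit the behaviour of the Fuglede--Kadison determinant along such an invariant filtration. Writing $\pi_j = P_j - P_{j-1}$ and $\Mh_j = \pi_j M_N(\Rh) \pi_j$ for the corner on the $j$-th graded piece, the invariance of the ranges yields the additivity of Brown measures $\mu_{AU} = \sum_{j=1}^M \mu_{B^{(j)}U}$, equivalently, by the characterisation \eqref{eq:9}, the factorisation $\ddet_{M_N(\Rh)}(z - AU) = \prod_{j=1}^M \ddet_{\Mh_j}(z - B^{(j)}U)$ for all $z$. Here $B^{(j)}$ is the compression of the cocycle $A$ to the rank-$r_j$ bundle $\mathrm{Ran}(\pi_j)$; trivialising this bundle by a measurable orthonormal frame identifies $\Mh_j$ with $M_{r_j}(\Rh)$ carrying its trace of total mass $r_j$. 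The key structural input, and the place where I would follow Margulis \cite{M}, is that the angles between the $V_i(\omega)$ decay subexponentially, so that the compressed cocycle $B^{(j)}$ has a single Ljapunov exponent equal to $\chi_j$ of multiplicity $r_j$, with $\int_{\Omega} \log|\det B^{(j)}(\omega)| \, dP(\omega) = r_j \chi_j$.

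It then remains to prove the single-exponent case: for a cocycle $B$ with one Ljapunov exponent $\chi$ of multiplicity $r$ one must show $\mu_{BU} = r\,(M_* \mu)$ with $M = e^{\chi}$, and hence $\log \ddet_{M_r(\Rh)}(1 - BU) = r \chi^+$. Applying the scalar automorphisms $\alpha_{\zeta}$ of proposition \ref{t3} (with $\zeta \in S^1$ acting diagonally) to $BU$ and using \eqref{eq:9} shows that $\mu_{BU}$ is rotation invariant of total mass $r$. The single-exponent hypothesis makes the cocycle conformal, $\frac{1}{n} \log (\|v B_n(\omega)\| / \|v\|) \to \chi$ uniformly in $v$ for almost every $\omega$, and I would combine this with the result of \cite{DSch} behind theorem \ref{t6} --- that the Brown measure of $|a| U$ agrees with that of $M(a) U$ --- to force the radial part of the rotation-invariant $\mu_{BU}$ onto the single circle $\{|\lambda| = M\}$, the scale being pinned by applying theorem \ref{t6} to the scalar determinant cocycle $\det B$ whose unique exponent is $r \chi$. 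Granting $\mu_{BU} = r\,(M_* \mu)$, Jensen's formula gives $\log \ddet_{M_r(\Rh)}(1 - BU) = r \int_{S^1} \log|1 - \lambda M| \, d\mu(\lambda) = r \log^+ M = r \chi^+$.

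Summing the single-exponent formula over the blocks yields \eqref{eq:28}, and the Brown measure additivity of the second paragraph gives $\mu_{AU} = \sum_j r_j (M_{j*} \mu)$ with support the union of the circles $\{|\lambda| = M_j\}$. Formula \eqref{eq:29} is the case $M = 1$; when every $\chi_j \ge 0$ one has $\chi_j^+ = \chi_j$, so $\sum_j r_j \chi_j = \int_{\Omega} \log|\det A(\omega)| \, dP(\omega)$ by assertion {\bf iv} of theorem \ref{t8}, which is \eqref{eq:30}. I expect the single-exponent step to be the main obstacle: precisely because unique, hence uniform, ergodicity is not assumed, the elementary spectral-radius argument of proposition \ref{t5} is unavailable --- the spectral radius of $BU$ may strictly exceed $M$ --- so the concentration of the rotation-invariant Brown measure on one circle cannot come from norm growth and must instead be extracted from the \cite{DSch}/\cite{HSch1} analysis. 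Turning the scalar statement of \cite{DSch} into the required statement for the conformal matrix cocycle $B$, the matrix analogue of the reduction to non-negative $a$, is the technical heart of the proof.
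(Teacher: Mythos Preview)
Your block-triangularisation step is sound: choosing a measurable orthonormal frame adapted to the Oseledets flag $W_\bullet$ gives a \emph{unitary} $S \in M_N(L^{\infty}(\Omega))$, conjugation by $S$ preserves $\ddet_{M_N(\Rh)}$, the resulting $\tilde A = S A (S\verk\gamma)^{-1}$ is block upper triangular, and Brown's proposition on triangular determinants factors $\ddet_{M_N(\Rh)}(1-\tilde A U)$ into the determinants of the diagonal blocks $1-B^{(j)}U$. The compressed cocycles $B^{(j)}$ inherit the hypotheses (boundedness, and $\log^+$-integrability of the inverse, follow from those of $A$ because $S$ is unitary), and the subexponential angle decay does force each $B^{(j)}$ to have the single exponent $\chi_j$.

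The genuine gap is exactly where you locate it: the single-exponent matrix case. You reduce an $N\times N$ cocycle with $M$ exponents to $M$ cocycles of sizes $r_j\times r_j$, each with one exponent, but you still have \emph{matrix} cocycles, and the Dykema--Schultz input behind theorem~\ref{t6} is irreducibly scalar --- it rests on the Aluthge iteration for $|a|U$, and there is no evident matrix analogue of the passage $aU\mapsto |a|U$ via the automorphism $\alpha_{\oc}$. Your conformality remark (that $\tfrac{1}{n}\log\|B^{(j)}_n\|$ and $-\tfrac{1}{n}\log\|(B^{(j)}_n)^{-1}\|$ both tend to $\chi_j$ a.e.) does not pin down the Brown measure, since the convergence is only almost-everywhere and hence says nothing about the spectral radius of $B^{(j)}U$. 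In short, the Oseledets step does not lower the essential difficulty: afterwards you are still facing ``compute $\mu_{BU}$ for a matrix cocycle $B$'', merely with $r_j$ in place of $N$.

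The paper avoids this by triangularising \emph{fully} rather than block-triangularising. Instead of using the Oseledets flag over $\Omega$, it passes to the skew product $\hat\Omega=\Omega\times D$ with $D=B\backslash\GL_N(\C)$ the full flag variety, on which the lifted automorphism $\hat\gamma(\omega,d)=(\gamma(\omega),dA(\omega))$ admits an ergodic invariant probability measure projecting to $P$ (this existence statement is the Margulis input, quite different from the angle estimate you invoke from \cite{M}). A bounded Borel section $f:D\to\GL_N(\C)$ with relatively compact image then conjugates $\hat A\hat U$ inside $M_N(\hat\Rh)$ to $E\hat U$ with $E(\hat\omega)$ \emph{strictly} upper triangular, so Brown's proposition factors the determinant into \emph{scalar} pieces $\ddet_{\hat\Rh}(1-e_{ii}\hat U)$, to which theorem~\ref{t6} applies directly. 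The diagonals $e_{ii}$ have Birkhoff averages equal (with multiplicity) to the $\chi_j$, and a short comparison of spectral projections shows $\ddet_{M_N(\Rh)}(1-AU)=\ddet_{M_N(\hat\Rh)}(1-\hat A\hat U)$. Thus the paper never needs a matrix version of \cite{DSch}; the base extension does the work your single-exponent step would have had to do.
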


 \begin{proof}
 Formulas \eqref{eq:29} and \eqref{eq:30} follow from \eqref{eq:28} using assertion {\bf iv} of the Oseledets theorem. As for the Brown measure, note that for $z \in \C^*$ using \eqref{eq:28} we have:
 \begin{eqnarray}
 \log \ddet_{M_N (\Rh)} (z - AU)  & = & \log |z|^N + \log \ddet_{M_N (\Rh)} (1 - z^{-1} AU) \nonumber \\
  & = & \nsum^M_{i=1}  r_i \max (\log |z| , \chi_i) \; . \label{eq:31} 
 \end{eqnarray}
 Since both sides are subharmonic \cite{B} this identity must hold for $z = 0$ as well, c.f. \cite{R} formula (7) on p. 344. Alternatively, this follows from an easy extension of equation \eqref{eq:8}:
 \begin{eqnarray*}
 \log \ddet_{M_N (\Rh)} (AU) & = & \log \ddet_{M_N (L^{\infty} (\Omega))} (A) \\
 & = & \int_{\Omega} \log |\det A (\omega)| \, dP (\omega) = \nsum^M_{i=1} r_i \chi_i \; .
 \end{eqnarray*}
 Now, combining \eqref{eq:31} with the equation
 \[
 \max (\log |z| , \chi) = \int_{\C} \log |z - \lambda| \, d (\exp \chi)_* \mu (\lambda) \; ,
 \]
 the formula for the Brown measure of the operator $AU$ in $M_N (\Rh)$ follows.

 It remains to prove formula \eqref{eq:28}. Essentially following the proof of Oseldets' theorem in \cite{M} App.~A we reduce to the case where $A$ is in triangular form. Then the assertion will be deduced from theorem \ref{t6}. 

 Let $B \subset \GL_N (\C)$ be the subgroup of upper triangular matrices and consider the projective variety of right cosets $D = B \ohne \GL_N (\C)$ with its Borel algebra $\Bh (D)$. We equip $\hat{\Omega} = \Omega \times D$ with the product $\sigma$-algebra $\hat{\Ah} = \Ah \otimes \Bh (D)$. The automorphism $\gamma$ of $\Omega$ extends to a measurable automorphism $\hat{\gamma}$ of $\hat{\Omega}$ by setting $\hat{\gamma} (\omega , d) = (\gamma (\omega) , d A (\omega))$. Note that $\GL_N (\C)$ acts on $D$ by right multiplication. Then we have
 \[
 \hat{\gamma}^n (\omega , d) = (\gamma^n (\omega) , d \, A_n (\omega)) \quad \mbox{for all} \; n \in \Z \; .
 \]
 Let $\pi : \hat{\Omega} \to \Omega$ be the projection. Since $(\Omega , \Ah)$ is a standard Borel space the argument in \cite{M} App.~A2 shows that there is a probability measure $\hat{P}$ on $\hat{\Omega}$ with $\pi_* \hat{P} = P$ such that $\hat{\gamma}$ acts as a measure preserving ergodic automorphism on $(\hat{\Omega} , \hat{\Ah} , \hat{P})$. Set $\hat{A} = A \verk \pi : \hat{\Omega} \to \GL_N (\C)$. Then $\hat{A}$ is essentially bounded and $\log^+ \| \hat{A}^{-1} (\;)\|_{\sigma} = (\log^+ \|A^{-1} (\;) \|_{\sigma}) \verk \pi$ is integrable over $\hat{\Omega}$. By setting $\hat{V}_j (\hat{\omega}) = V_j (\pi (\hat{\omega})), \hat{\Omega}' = \pi^{-1} (\Omega') = \Omega' \times D$ we get a Ljapunov decomposition for $\hat{A}$ over $\hat{\Omega}'$ from the one for $A$ over $\Omega'$. 

 The Ljapunov exponents and multiplicities are the same for $A$ and $\hat{A}$. By \cite{FG} Theorem 1 there exists a Borel section $f : D \to \GL_N (\C)$ of the natural projection $\GL_N (\C) \to D$ such that $f (D)$ is relatively compact in $\GL_N (\C)$. For the map $\hat{f} : \hat{\Omega} \to \GL_N (\C)$ defined by $\hat{f} (\omega , d) = f (d)$ the functions $\| \hat{f} (\hat{\omega}) \|_{\sigma}$ and $\| \hat{f} (\hat{\omega})^{-1} \|_{\sigma}$ are therefore bounded for $\hat{\omega} \in \hat{\Omega}$. Hence $\hat{f}$ defines an invertible element of $M_N (L^{\infty} (\hat{\Omega})) \subset M_N (\hat{\Rh})$ where $\hat{\Rh} = L^{\infty} (\hat{\Omega}) \rtimes_{\hat{\gamma}} \Z$. Denoting by $\hat{U}$ the canonical unitary operator in $\hat{\Rh}$ we therefore have the formula:
 \begin{equation} \label{eq:32}
 \ddet_{M_N (\hat{\Rh})} (1 - \hat{A} \hat{U}) = \ddet_{M_N (\hat{\Rh})} (1 - \hat{f} \hat{A} \hat{U} \hat{f}^{-1}) \; .
 \end{equation}
 Now, by definition of $f$ we have for all $g \in \GL_N (\C)$ that $f (Bg) = b_g g$ for some $b_g \in B$. This implies that the matrix $f (d) A (\omega) f (dA(\omega))^{-1}$ is in $B$ for every $d \in D$. Therefore $E (\hat{\omega}) = \hat{f} (\hat{\omega}) \hat{A} (\hat{\omega}) \hat{f} (\hat{\gamma} (\hat{\omega}))^{-1}$ is in $B$ as well for every $\hat{\omega} \in \hat{\Omega}$. Since $\hat{U} \hat{f}^{-1} \hat{U}^{-1} = (\hat{f} \verk \hat{\gamma})^{-1}$ it follows that $\hat{f} \hat{A} \hat{U} \hat{f}^{-1} = E \hat{U}$ in $M_N (\hat{\Rh})$. Here we view the essentially bounded function $E : \hat{\Omega} \to B \subset \GL_N (\C)$ as an element of $M_N (L^{\infty} (\hat{\Omega})) \subset M_N (\hat{\Rh})$. Using formula \eqref{eq:32} we now get the equation
 \begin{equation} \label{eq:33}
 \ddet_{M_N (\hat{\Rh})} (1 - \hat{A} \hat{U}) = \ddet_{M_N (\hat{\Rh})} (1 - E \hat{U}) \; .
 \end{equation}
 Since $1 - E \hat{U} \in M_N (\hat{\Rh})$ is a triangular matrix over $\hat{\Rh}$, it follows from \cite{B} 1.8 Proposition that we have:
 \begin{equation} \label{eq:34}
 \ddet_{M_N (\hat{\Rh})} (1 - E \hat{U}) = \prod^N_{i=1} \ddet_{\hat{\Rh}} (1 - e_{ii} \hat{U}) \; .
 \end{equation}
 Here $e_{11} , \ldots , e_{NN} \in L^{\infty} (\hat{\Omega})$ are the diagonal entries of $E$. They are non-zero in every point of $\hat{\Omega}$, and $\log |e_{ii}|$ is in $L^1 (\hat{\Omega})$ for every $1 \le i \le N$ since $\log^+ \| \hat{A}^{\pm 1} (\_) \|_{\sigma}$ is in $L^1 (\hat{\Omega})$. The Ljapunov exponents of $\hat{A}$ i.e. of $A$ and their multiplicities are equal to the ones of $E$ since $\hat{A}$ and $E$ are $\Lh$-cohomologous c.f. \cite{M} A.1.

 \begin{claim}
 The Ljapunov exponents of the triangular matrix $E$ are the numbers $\int_{\hat{\Omega}} \log |e_{ii}| \, d\hat{P}$ counted with their multiplicities.
 \end{claim}

 {\bf Proof of the claim}
 According to \cite{M} A.3 (*) the matrix $E$ is $\Lh$-cohomologous by a triangular Ljapunov function with $1$'s in the diagonal to a triangular matrix $E_0$ of class $\Psi_0$. But for the matrices in $\Psi_0$ the Ljapunov exponents are the integrals over the logs of the absolute values of the diagonal entries by loc.~cit. and the Birkhoff ergodic theorem. Since the Ljapunov exponents of $E$ and $E_0$ agree and also their diagonal entries the claim follows.

 Thus we have seen that the Ljapunov exponents $\chi_j$ of $A$ are the numbers $\int_{\hOmega} \log |e_{ii}| \, d\hP$ with their multiplicities. 

 Applying theorem \ref{t6} gives the relation:
 \begin{equation} \label{eq:35}
 \log \ddet_{\hRh} (1 - e_{ii} \hU) = \left( \int_{\hOmega} \log |e_{ii}| \, d\hP \right)^+ \; .
 \end{equation}
 Together with equations \eqref{eq:33} and \eqref{eq:34} we finally get:
 \begin{equation} \label{eq:36}
 \log \ddet_{M_N (\hat{\Rh})} (1 - \hA \hU) = {\nsum^M_{j=1}} r_j \chi^+_j \; .
 \end{equation}
 Together with the next claim for $\Phi = 1 - AU$, formula \eqref{eq:28} and hence the theorem follow.

 \begin{claim}
 Let $I \subset \Z$ be a finite subset and $A_i \in M_N (L^{\infty} (\Omega))$ for $i \in I$. Consider the elements $\Phi = \sum_{i \in I} A_i U^i$ in $M_N (\Rh)$ and $\hat{\Phi} = \sum_{i\in I} \hA_i \hU^i$ in $M_N (\hat{\Rh})$ where $\hA_i = A_i \verk \pi$. Then we have $\det_{M_N (\Rh)} (\Phi) = \det_{M_N (\hat{\Rh})} (\hat{\Phi})$.
 \end{claim}

{\bf Proof of the claim}
 Consider the pullback operators
 \[
 \pi^* : L^2 (\Omega) \to L^2 (\hat{\Omega}) \quad \mbox{and} \quad \gamma^* : L^2 (\Omega) \to L^2 (\Omega) \; \mbox{and} \; \hat{\gamma}^* : L^2 (\hat{\Omega}) \to L^2 (\hat{\Omega}) \; .
 \]
 Since we have $\pi_* \hP = P$ the map $\pi^*$ is an isometric embedding which we will view as an inclusion. The operators $\gamma^*$ and $\hgamma^*$ are unitary. Let $\Hh$ be the orthogonal complement of $L^2 (\Omega)$ in $L^2 (\hOmega)$. The relation $\pi \verk \hgamma = \gamma \verk \pi$ implies that $\pi^* \verk \gamma^* = \hgamma^* \verk \pi^*$ i.e. that the operator $\hgamma^*$ on $L^2 (\hOmega)$ restricts to the operator $\gamma^*$ on $L^2 (\Omega)$. We will view $L^{\infty} (\Omega)$ via the norm preserving inclusion $\pi^* : L^{\infty} (\Omega) \to L^{\infty} (\hOmega)$ as a subalgebra of $L^{\infty} (\hOmega)$. Multiplication by elements $a \in L^{\infty} (\Omega)$ respects the subspace $L^2 (\Omega)$ in $L^2 (\hOmega)$. Recall the Hilbert space $\Kh = \hat{\bigoplus_{i\in \Z}} L^2 (\Omega) U^i$ and define the Hilbert space $\hKh = \hat{\bigoplus_{i \in \Z}} L^2 (\hOmega) \hU^i$ similarly. We will view $\Kh$ in the obvious way as a subspace of $\hKh$. Left multiplication by $\hPhi$ on $\hKh^N$ respects $\Kh^N$ and $\hPhi$ restricted to $\Kh^N$ is given by left multiplication with $\Phi$. It follows that the operators $|\hPhi|$ and $\hE_{\lambda} = E_{\lambda} (|\hPhi|)$ in $M_N (\hRh)$ respect $\Kh^N$ and on $\Kh^N$ restrict to the operators $|\Phi|$ and $E_{\lambda} = E_{\lambda} (|\Phi|)$ in $M_N (\Rh)$. Noting that we identified $U^0 \in \Kh$ with $\hU^0 \in \hKh$ we therefore get:
 \begin{eqnarray}
 \htau_N (\hE_{\lambda}) & = & (\hE_{\lambda} (\hU^0) , \hU^0)  \label{eq:37}\\
  & = & (E_{\lambda} (U^0) , U^0) = \tau_N (E_{\lambda}) \nonumber
 \end{eqnarray}
 Therefore we find:
 \begin{eqnarray*}
 \log \ddet_{M_N (\Rh)} (\Phi) & = & \int^{\infty}_0 \log \lambda \;  d\tau_N (E_{\lambda}) \\
 & = & \int^{\infty}_0 \log \lambda \; d\htau_N (\hE_{\lambda}) = \log \ddet_{M_N (\hRh)} (\hat{\Phi}) \; .
 \end{eqnarray*}
 Thus the claim is proved and theorem \ref{t9} as well.
 \end{proof}

 We can now prove formula \eqref{eq:1} of the introduction. Consider an element $\Phi$ of $\Rh$ of the form
 \begin{equation} \label{eq:38}
 \Phi = a_N U^N + \ldots + a_1 U + 1 \quad \mbox{with} \; a_i \in L^{\infty} (\Omega)
 \end{equation}
 and let $A_{\Phi}$ be the matrix
 \begin{equation} \label{eq:39}
 A_{\Phi} = \left(
 \vcenter{\xymatrix@=10pt{
 0 \ar@{}[ddd] |{\mbox{\Large 0}} & 1 &  & \ar@{}[d] |{\mbox{\Large 0}}\\
   & 0 \ar@{.}[dr]& 1 \ar@{.}[dr] & \\
   &   & 0 & 1 \\
 -a_N \ar@{.}[rrr] & & & -a_1
 }} \right) \quad \mbox{in} \; M_N  (L^{\infty} (\Omega)) \; .
 \end{equation}
 Then we have the following result:

 \begin{theorem} \label{t10}
 Let $\gamma$ be a measure preserving ergodic automorphism of a probability space $(\Omega , \Ah , P)$ where $(\Omega , \Ah)$ is a standard Borel space. Assume that $\log |a_N|$ is integrable. Then $A_{\Phi}$ satisfies the assumptions of theorems \ref{t8} and \ref{t9} and we have the formula
 \begin{equation} \label{eq:40}
 \log \ddet_{\Rh} \Phi = {\nsum_j} r_j \chi^+_j \; .
 \end{equation}
 Here the $\chi_j$'s and $r_j$'s are the Ljapunov exponents of $A_{\Phi}$ and their multiplicities.
 \end{theorem}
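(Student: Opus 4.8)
The plan is to reduce the scalar determinant $\det_{\Rh}\Phi$ to the matrix determinant $\det_{M_N(\Rh)}(1 - A_{\Phi}U)$ that was already evaluated in theorem \ref{t9}. Concretely, I would establish the linearization identity
\[
\det_{\Rh}\Phi = \det_{M_N(\Rh)}(1 - A_{\Phi}U)
\]
and then apply \eqref{eq:28} with $A = A_{\Phi}$. This requires first checking that $A_{\Phi}$ meets the hypotheses of theorems \ref{t8} and \ref{t9}, and then doing the linearization, which is the real content.

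For the hypotheses: the companion matrix $A_{\Phi}$ has entries drawn from $0$, $1$ and the essentially bounded functions $-a_i$, so $\|A_{\Phi}(\omega)\|_{\sigma}$ is essentially bounded and $\log^+\|A_{\Phi}\|_{\sigma}$ is integrable. Expanding the determinant along the first column gives $\det A_{\Phi} = \pm\, a_N$, so integrability of $\log|a_N|$ forces $a_N \neq 0$ almost everywhere, whence $A_{\Phi}(\omega) \in \GL_N(\C)$ a.e. The entries of $A_{\Phi}^{-1}$ are the adjugate minors, which are polynomials in the bounded $a_i$, divided by $a_N$; hence $\|A_{\Phi}^{-1}(\omega)\|_{\sigma} \le C/|a_N(\omega)|$ for a constant $C$, so that $\log^+\|A_{\Phi}^{-1}\|_{\sigma} \le \log^+ C + (\log|a_N|)^-$ is integrable because $\log|a_N| \in L^1(\Omega)$. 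This supplies exactly the integrability assumptions of both theorems.

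The heart of the argument is the linearization identity, and this is where the non-commutativity of $U$ with the coefficients must be tracked carefully. Writing out $M = 1 - A_{\Phi}U$, it is the $N\times N$ matrix over $\Rh$ with $1$'s on the diagonal, $-U$ on the superdiagonal, and last row $(a_N U, a_{N-1}U, \ldots, a_2 U, 1 + a_1 U)$. I would clear this last row by successive \emph{left} multiplications by lower triangular elementary matrices $E_k = 1 + c_k E_{N,k}$ for $1 \le k \le N-1$, each realizing the row operation $R_N \mapsto R_N + c_k R_k$. Choosing $c_k$ inductively to annihilate the $(N,k)$ entry and using that row $R_k$ meets only columns $k$ and $k+1$, the columns already cleared stay zero while the running entry in column $k+1$ satisfies a recursion of the form $p_{k+1} = a_{N-k}U + p_k U$; a short induction then shows that the final $(N,N)$ entry equals $1 + a_1 U + \cdots + a_N U^N = \Phi$. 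The outcome is an upper triangular matrix with diagonal $1,\ldots,1,\Phi$. Each $E_k$ is triangular with unit diagonal, so by Brown's proposition (\cite{B} 1.8) it has determinant $1$; by the multiplicativity \eqref{eq:7} the row operations leave the Fuglede--Kadison determinant unchanged, and a second application of \cite{B} 1.8 to the triangular end result gives $\det_{M_N(\Rh)}(1 - A_{\Phi}U) = \det_{\Rh}\Phi$.

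Combining the identity with \eqref{eq:28} yields \eqref{eq:40}, with the exponents $\chi_j$ and multiplicities $r_j$ being by construction those of $A_{\Phi}$. The step I expect to demand the most care is the row reduction: one must keep the elementary matrices strictly triangular so that \cite{B} 1.8 applies and their determinants are $1$, and one must follow the accumulation of the powers $U^k$ on the correct side, since the left factors $c_k$ and the superdiagonal factors $-U$ do not commute with the coefficient functions $a_i$.
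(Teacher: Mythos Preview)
Your proposal is correct and follows essentially the same route as the paper: verify the integrability hypotheses for the companion matrix, then establish the linearization identity $\det_{\Rh}\Phi = \det_{M_N(\Rh)}(1 - A_{\Phi}U)$ by row-reducing with strictly lower-triangular elementary matrices and invoking \cite{B}~1.8, and finally apply \eqref{eq:28}. The only cosmetic difference is that the paper row-reduces $U^* - A_{\Phi} = (1 - A_{\Phi}U)U^*$ instead of $1 - A_{\Phi}U$ itself, obtaining the diagonal $(U^*,\ldots,U^*,\Phi U^*)$ rather than your $(1,\ldots,1,\Phi)$; since $\det_{\Rh}U^* = 1$ the two computations are equivalent.
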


 \begin{rem}
 Note that we have
 \[
 {\nsum_j} r_j \chi_j = \int_{\Omega} \log |\det A_{\Phi} (\omega)| \, dP (\omega) = \int_{\Omega} \log |a_N (\omega)| \, dP (\omega) \; .
 \]
 Thus equation \eqref{eq:40} implies formulas \eqref{eq:2} and \eqref{eq:3} from the introduction.
 \end{rem}

 \begin{proof}
 Since $\log |a_N|$ is integrable we may represent $a_N$ by a non-vanishing measurable function. We have $|\det A_{\Phi} (\omega)| = |a_N (\omega)|$. Hence we can represent $A_{\Phi}$ by a measurable map $A_{\Phi} : \Omega \to \GL_N (\C)$ for which $\| A_{\Phi} (\omega) \|_{\sigma}$ is bounded and $\log^+ \|A^{-1}_{\Phi} (\omega) \|_{\sigma}$ is integrable over $\Omega$. According to equation \eqref{eq:28} we have
 \[
 \log \ddet_{M_N (\Rh)} (1 - A_{\Phi} U) = \sum_j r_j \chi^+_j \; .
 \]
 It therefore remains to show the equality:
 \begin{equation} \label{eq:41}
 \ddet_{\Rh} \Phi = \ddet_{M_N (\Rh)} (1 - A_{\Phi} U) \; .
 \end{equation}
 Successive left multiplication for $i = 1 , \ldots , N-1$ of the matrix $U^* - A_{\Phi}$ in $M_N (\Rh)$ by elementary matrices of the form
 \[
 E_i = \left(
 \vcenter{\xymatrix@=10pt{
 1 \ar@{.}[drrr] & & 0 & \\
 0 \ar@{.}[r] & e_i \ar@{.}[r] & 0 & 1
 }} \right) \quad \mbox{in} \; \GL_N (\Rh)
 \]
 with $e_i = - (a_N U^i + \ldots + a_{N - i+1} U)$ at $i$-th place transforms $U^* - A_{\Phi}$ into the matrix
 \[
 \left(
 \vcenter{\xymatrix@=10pt{
 U^* \ar@{.}[dr] & -1 \ar@{.}[dr] & \\
  & U^* & -1 \\
 0 \ar@{.}[r] & 0 & \Phi U^* 
 }}
 \right) \; .
 \]
 By \cite{B} 1.8 Proposition the Fuglede--Kadison determinant of a triangular matrix over $\Rh$ is the product of the determinants of the diagonal entries. Hence multiplication by $E_i$ does not effect the determinant and we find
 \[
 \ddet_{M_N (\Rh)} (1 - A_{\Phi} U) = \ddet_{M_N (\Rh)} (U^* - A_{\Phi}) = \ddet_{\Rh} \Phi \; .
 \]
 \end{proof}

 \begin{rem}
 For $\Psi = U^N + c_{N-1} U^{N-1} + \ldots + c_0$ with $c_0 , \ldots , c_{N-1}$ in $L^{\infty} (\Omega)$, we have $\ddet_{\Rh} \Psi = \ddet_{\Rh} \Phi$ where $\Phi = a_N U^N + \ldots + a_1 U + 1$ and $a_i = \oc_{N-i} \verk \gamma^i$. This follows from the relation $\Phi = U^N \Psi^*$. Theorem \ref{t10} can be applied to calculate $\ddet_{\Rh} \Psi$ if $\log |c_0|$ is integrable.
 \end{rem}

\section{The Fuglede--Kadison determinant for the discrete Heisenberg group} \label{sec5}
Let $(Z , \Bh , \mu)$ be a probability space and let $\Mh = \int_Z \Mh (\zeta) \, d\mu (\zeta)$ be a decomposable von Neumann algebra. Assume that we have a measurable field $\zeta \mapsto \tau_{\zeta}$ of faithful normal and finite traces on the $\Mh (\zeta)$'s such that $\tau_{\zeta} (1) = 1$ for all $\zeta \in Z$. Then according to \cite{Di} II.Ch.5, Ex. 2 the integral $\tau = \int_Z \tau_{\zeta} \, d\mu (\zeta)$ defines a faithful normal finite trace on $\Mh$ with $\tau (1) = 1$. For the corresponding determinants we have the formula:
\begin{equation} \label{eq:42}
\log \ddet_{\Mh} \Phi = \int_Z \log \ddet_{\Mh_{\zeta}} \Phi_{\zeta} \, d\mu (\zeta) 
\end{equation}
for every element $\Phi = \int_Z \Phi_{\zeta} \, d\mu (\zeta)$ in $\Mh$. This fact is asserted in \cite{Di} II.Ch.5, Ex. 2 for invertible $\Phi$, in which case $\Phi_{\zeta}$ is invertible for $\mu$-almost all $\zeta$ as well. The general case follows with formula \eqref{eq:6} and Levi's theorem.

Let $\Gamma$ be the integral Heisenberg group with generators $x,y$ and commutator $y^{-1} x^{-1} yx = z$ in the center of $\Gamma$. The Pontrjagin dual of the center is the circle $S^1$. For $\zeta \in S^1$ let $\Rh_{\zeta}$ be the semidirect product $\Rh_{\zeta} = L^{\infty} (S^1) \rtimes_{\zeta} \Z$ corresponding to the rotation by $\zeta$ on $S^1$ with its Haar measure $\mu$. It is known that the von~Neumann algebra $\Nh\Gamma$ of $\Gamma$ decomposes in the form $\Nh\Gamma = \int_{S^1} \Rh_{\zeta} \, d\mu (\zeta)$. The canonical trace on $\Rh_{\zeta}$ will be denoted by $\tau_{\zeta}$. The integral $\tau = \int_{S^1} \tau_{\zeta} \, d\mu (\zeta)$ gives the standard trace on the group algebra $\Nh\Gamma$. The operators $x$ and $y$ decompose as follows: $x = \int_{S^1} U_{\zeta} \, d\mu (\zeta)$ where $U_{\zeta}$ is the operator in $\Rh_{\zeta}$ which was simply denoted $U$ in the previous sections. Moreover $y$ decomposes into the multiplication operators by the coordinate of the $\zeta$'th copy of $S^1$ c.f. \cite{AP} for the $C^*$-algebra case. The complex group ring $\C\Gamma$ of $\Gamma$ is a subalgebra of $\Nh\Gamma$. For complex polynomials $a_i (y,z)$ in the commuting variables $y$ and $z$ we can form the non-commutative polynomial
\[
\Phi = {\nsum^N_{i=0}} a_i (y,z) x^i \quad \mbox{in} \; \C\Gamma \subset \Nh\Gamma \; .
\]
It decomposes into the operators
\[
\Phi_{\zeta} = {\nsum^N_{i=0}} a_i (\_ , \zeta) U^i_{\zeta} \quad \mbox{in} \; \Rh_{\zeta} \; .
\]
By a classical result of Weyl, multiplication by $\zeta \in S^1$ acts uniquely ergodically on $S^1$ if $\zeta$ is not a root of unity. In this case $\Rh_{\zeta}$ is a $\mathrm{II}_1$-factor. Theorems \ref{t6} and \ref{t10} give the following result. For the proof note that $\log |a|$ is integrable over $S^1 \times S^1$ for any two-variable polynomial $a = a (\eta , \zeta)$ which is not the zero polynomial. In particular for $\mu$-almost all $\zeta \in S^1$ we then have $\log |a (\; , \zeta)| \in L^1 (S^1)$ so that $a (\; , \zeta)$ is non-zero on $S^1$ $\mu$-almost everywhere. 

\begin{theorem} \label{t11}
a) For any element of the form $\Phi = 1 - a (y,z) x$ in $\C\Gamma$ we have
\begin{equation} \label{eq:43}
\log \ddet_{\Nh\Gamma} \Phi = \int_{S^1} \left( \int_{S^1} \log |a (\eta, \zeta)| \, d\mu (\eta) \right)^+ \, d\mu (\zeta) \; .
\end{equation}
b) For $\Phi = a_N (y,z) x^N + \ldots + a_1 (y,z) x + 1$ in $\C \Gamma$ consider the matrix $A_{\Phi_{\zeta}}$ defined in \eqref{eq:39}. Assume that $a_N$ is not the zero polynomial. Then $A_{\Phi_{\zeta}}$ satisfies the assumptions of the multiplicative ergodic theorem for $\mu$-almost all $\zeta \in S^1$ and we have the formula:
\begin{equation} \label{eq:44}
\log \ddet_{\Nh\Gamma} \Phi = \int_{S^1} {\nsum_j} r_j (\zeta) \chi_j (\zeta)^+ \, d\mu (\zeta) \; .
\end{equation}
Here the $\chi_j (\zeta)$ are the Ljapunov exponents of the $A_{\Phi_{\zeta}}$ with their multiplicities $r_j (\zeta)$. 
\end{theorem}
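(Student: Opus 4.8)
The plan is to reduce the two formulas to the fiberwise computations already established in Theorems~\ref{t6} and~\ref{t10}, by combining the direct integral decomposition $\Nh\Gamma = \int_{S^1} \Rh_{\zeta}\,d\mu(\zeta)$ with formula~\eqref{eq:42}. Throughout, the base probability space of each fiber $\Rh_{\zeta} = L^{\infty}(S^1)\rtimes_{\zeta}\Z$ is $(S^1,\Bh(S^1),\mu)$, which is a standard Borel space, and by Weyl's theorem rotation by $\zeta$ is ergodic for every $\zeta$ that is not a root of unity, hence for $\mu$-almost all $\zeta$.

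For part~b) I would write $\Phi = \int_{S^1}\Phi_{\zeta}\,d\mu(\zeta)$ with $\Phi_{\zeta} = \sum_{i=0}^{N} a_i(\_,\zeta)U_{\zeta}^i$ and $a_0 = 1$, and apply~\eqref{eq:42} to get
\[
\log\ddet_{\Nh\Gamma}\Phi = \int_{S^1}\log\ddet_{\Rh_{\zeta}}\Phi_{\zeta}\,d\mu(\zeta).
\]
It then remains to identify the integrand, for $\mu$-almost every $\zeta$, with $\sum_j r_j(\zeta)\chi_j(\zeta)^+$ via Theorem~\ref{t10}. The only hypothesis of that theorem not already in hand is the integrability of $\log|a_N(\_,\zeta)|$ over $S^1$: since $a_N$ is a nonzero two-variable polynomial, $\log|a_N|$ is integrable over $S^1\times S^1$, so by Fubini $\log|a_N(\_,\zeta)|\in L^1(S^1)$ for $\mu$-almost all $\zeta$, and in particular $a_N(\_,\zeta)$ is nonzero $\mu$-almost everywhere on $S^1$. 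Theorem~\ref{t10} then applies on almost every fiber and yields~\eqref{eq:44} after integration in $\zeta$.

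Part~a) is the special case $N=1$, but it is cleaner to obtain it directly from Theorem~\ref{t6}: the same reduction gives $\Phi_{\zeta} = 1 - a(\_,\zeta)U_{\zeta}$, and since $a(\_,\zeta)$ is nonzero $\mu$-almost everywhere on $S^1$ for $\mu$-almost all $\zeta$ (again by Fubini applied to the integrable $\log|a|$ on $S^1\times S^1$), assertion~\textbf{1} of Theorem~\ref{t6} gives $\log\ddet_{\Rh_{\zeta}}\Phi_{\zeta} = \bigl(\int_{S^1}\log|a(\eta,\zeta)|\,d\mu(\eta)\bigr)^+$; integrating in $\zeta$ produces~\eqref{eq:43}.

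The only point requiring care, and the one I expect to be the sole obstacle, is measurability: formula~\eqref{eq:42} presupposes that $\zeta\mapsto\log\ddet_{\Rh_{\zeta}}\Phi_{\zeta}$ is $\mu$-measurable, so that the outer integrals above make sense. This is built into the decomposable structure of $\Nh\Gamma$ once $\Phi$ is presented as a measurable field $\zeta\mapsto\Phi_{\zeta}$, and it is consistent with the measurable dependence of the Ljapunov data $\zeta\mapsto(r_j(\zeta),\chi_j(\zeta))$ on the parameter. No genuinely new analytic difficulty arises, the substantive content having already been absorbed into Theorems~\ref{t6} and~\ref{t10}.
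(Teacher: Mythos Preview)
Your proposal is correct and follows essentially the same route as the paper: the paper's proof consists of exactly the observation that $\log|a|$ is integrable over $S^1\times S^1$ for a nonzero two-variable polynomial, so by Fubini the fiberwise hypotheses of Theorems~\ref{t6} and~\ref{t10} hold for $\mu$-almost all $\zeta$, and then one applies formula~\eqref{eq:42}. Your added remark on measurability is a reasonable gloss that the paper leaves implicit in the decomposable structure.
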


\begin{rems}
{\bf 1} Of course the theorem is valid for more general coefficient functions $a_i$.\\
{\bf 2} In case $a = a (\eta , \zeta)$ is non-zero for every $(\eta , \zeta) \in S^1 \times S^1$ formula \eqref{eq:43} also follows from the more elementary proposition \ref{t5} instead of theorem \ref{t6}.\\
{\bf 3} Applying the remark after the proof of proposition \ref{t5} we get that if $\zeta \in S^1$ is not a root of unity, the element $\Phi_{\zeta} = 1 - a (y,\zeta) x$ is a unit in $\Rh_{\zeta}$ if and only if $\int_{S^1} \log |a (\eta, \zeta)| \, d\mu (\eta) \neq 0$. Thus, if $\Phi = 1 - a (y,z) x$ is a unit in $\Nh\Gamma$ this integral is non-zero for $\mu$-almost all $\zeta$. \\
{\bf 4} In \cite{D} or by a different method in \cite{DS} it was shown that for $\Phi = \sum^N_{i=0} a_i (y,z) x^i$ in $\Z\Gamma \cap L^1 (\Gamma)^{\times} = \Z\Gamma \cap (\Nh\Gamma)^{\times}$ the number $\log \ddet_{\Nh\Gamma} \Phi$ is the entropy $h_{\Phi}$ of the natural $\Gamma$-action on the Pontrjagin dual of $\Z \Gamma / \Z \Gamma \Phi$. As explained in the introduction, the formulas for $h_{\Phi}$ resulting from theorem \ref{t11} were first found by Lind and Schmidt, c.f. \cite{LS}. 
\end{rems}

Mathematisches Institut\\
Einsteinstr. 62\\
48149 M\"unster\\
Germany
\end{document}